\newtheorem{theorem}{Theorem}
\newtheorem{lemma}{Lemma}
\newcommand{\ri}{\right}
\begin{document}		
	\begin{frontmatter}
	\title{A Trudinger-Moser inequality with mean value
zero \\on a compact Riemann surface with boundary}
	\author{Mengjie Zhang}
	\ead{zhangmengjie@ruc.edu.cn}
	\address{School of Mathematics, Renmin University of China, Beijing 100872, P.R.China}		
	\begin{abstract}
In this paper, on a compact Riemann surface $(\Sigma, g)$ with smooth boundary $\partial\Sigma$, we concern a Trudinger-Moser inequality with mean value
zero.
To be exact, let $\lambda_1(\Sigma)$
denotes the first eigenvalue of the Laplace-Beltrami operator with respect to the zero mean value condition and
 $\mathcal{ S }= \left\{ u \in W^{1,2} (\Sigma, g) : \|\nabla_g u\|_2^2 \leq 1\right.$ and $\left.\int _{\Sigma}  u \,dv_g = 0 \right \},$ where $W^{1,2}(\Sigma, g)$ is the usual Sobolev space, $\|\cdot\|_2$ denotes the standard $L^2$-norm
and $\nabla_{g}$ represent the gradient.
By the method of blow-up analysis, we obtain
\begin{eqnarray*}
\sup _{u \in \mathcal{S}} \int_{\Sigma} e^{ 2\pi u^{2}\left(1+\alpha\|u\|_2^{2}\right) }d v_{g}
<+\infty, \ \forall \ 0 \leq\alpha<\lambda_1(\Sigma);
\end{eqnarray*}
when $\alpha \geq\lambda_1(\Sigma)$, the supremum is infinite.
Moreover, we prove the supremum is attained by a function $u_{\alpha} \in C^\infty\left(\overline{\Sigma}\right)\cap \mathcal {S}$ for sufficiently small $\alpha> 0$.
Based on the similar
work in the Euclidean space, which was accomplished by Lu-Yang \cite{Lu-Yang},
we strengthen the result of Yang \cite{Yang2006IJM}.

\end{abstract}
		
	\begin{keyword}
	Trudinger-Moser inequality, Riemann surface, blow-up analysis, extremal function.\\
    2010 MSC: 46E35; 58J05; 58J32.
	\end{keyword}
		
	\end{frontmatter}

\section{Introduction}
Let $\Omega\subseteq \mathbb{R}^2$ be a smooth bounded domain and $W_0^{1,2}(\Omega)$ be the completion of $C_0^{\infty}(\Omega)$ under the Sobolev norm
$\|\nabla_{\mathbb{R}^2} u\|_2^2= \int_{\Omega}{|\nabla_{\mathbb{R}^2} u|^2}dx,$
where $\nabla_{\mathbb{R}^2}$ is the gradient operator on ${\mathbb{R}^2}$ and $\|\cdot\|_2$ denotes the standard $L^2$-norm.
The classical Trudinger-Moser inequality \cite{Yudovich, Pohozaev, Peetre, Trudinger1967, Moser1970}, as the limit case of the Sobolev embedding, says
    \begin{eqnarray}\label{Trudinger-Moser}
    \sup_{u\in W_0^{1, 2}(\Omega), \, \|\nabla_{\mathbb{R}^2} u\|_2\leq 1}
    \int_\Omega e^{\ \beta u^2}dx<+\infty, \ \forall \ \beta\leq 4\pi.
    \end{eqnarray}
Moreover, $4\pi$ is called the best
constant for this inequality in the sense that when $\beta> 4\pi$, all integrals in (\ref{Trudinger-Moser}) are still finite, but the supremum is infinite.
It is interesting to know whether or not the supremum in (\ref{Trudinger-Moser}) can be attained.
For this topic, we refer the reader to Carleson-Chang \cite{C-C},
Flucher \cite{Flucher}, Lin \cite{Lin}, Struwe \cite{Struwe}, Adimurthi-Struwe \cite{A-Struwe}, Li \cite{Li-Sci}, Yang \cite{Yang-JFA-06}, 
Zhu \cite{ZhuJY}, Tintarev \cite{Tintarev} and the references therein.

There are many extensions of (\ref{Trudinger-Moser}).
Adimurthi-Druet \cite{AD} generalized (\ref{Trudinger-Moser}) to the following form:
    \begin{equation}\label{T-AM}
  \sup _ { u \in W _ { 0 } ^ { 1,2 } ( \Omega ) , \| \nabla_{\mathbb{R}^2} u \| _ { 2 } \leq 1 } \int _ { \Omega } e ^ { 4 \pi u ^ { 2 } \left( 1 + \alpha \| u \| _ { 2 } ^ { 2 } \right) } dx <+\infty, \ \forall\ 0 \leq \alpha < \lambda _ { 1 } ( \Omega ),
\end{equation}
where $\lambda _ { 1 } ( \Omega )$ is the first eigenvalue of the Laplacian with Dirichlet boundary condition in $\Omega .$
This inequality is sharp in the sense that if $\alpha \geq \lambda _ { 1 } ( \Omega )$, all integrals in (\ref{T-AM}) are still finite, but the supremum is infinite. Obviously, (\ref{T-AM}) is reduced to (\ref{Trudinger-Moser}) when $\alpha=0$.
Various extensions of the inequality (\ref{T-AM}) were obtained by Yang \cite{Yang-JFA-06,Yang-JDE-15}, Tintarev \cite{Tintarev} and Zhu \cite{ZhuJY} respectively.
It was extended by Lu-Yang \cite{Lu-Yang} to a version, namely
\begin{equation}\label{T-LY}
\sup _ {u \in W^{1,2}(\Omega),
\int_{\Omega} u dx=0, \| \nabla_{\mathbb{R}^2} u \| _ { 2 } \leq 1 } \int _ { \Omega } e ^ { 2 \pi u ^ 2 \left( 1 + \alpha \| u \| _ 2 ^ 2 \right)  } d x < + \infty, \ \forall\ 0 \leq \alpha < \overline{\lambda} _ 1 ( \Omega ),
\end{equation}
where 
$ \overline{\lambda} _ 1 ( \Omega )$ denotes the first nonzero Neumann eigenvalue of the Laplacian operator.
This inequality is sharp in the sense that all integrals in (\ref{T-LY}) are still finite when $ \alpha \geq  \overline{\lambda} _ 1 ( \Omega)$, but the supremum is infinite.
Moreover, for sufficiently small $\alpha> 0$, the supremum is attained.

Trudinger-Moser inequalities were introduced on Riemannian manifolds by Aubin \cite{A}, Cherrier \cite{C} and Fontana \cite{Fontana}. In particular, let $( \Sigma , g )$ be a 2-dimensional compact Riemann surface, $W^ { 1 , 2 } ( \Sigma, g) $ the completion of $C ^ { \infty } ( \Sigma)$ under the norm
$\|u \| ^2_ { W^{1,2}( \Sigma, g) } =  \int _ { \Sigma }( u^2+| \nabla_g u | ^2)\, dv_g$, where $\nabla_g$ stands for the gradient operator on $(\Sigma, g)$.
When $( \Sigma , g )$ is closed Riemann surface,
there holds
\begin{equation}\label{T-MM}
  \sup _ {u \in W ^ { 1,2 } ( \Sigma, g), \int _\Sigma  u dv_g = 0,  \| \nabla_g u \| _ { 2 } \leq 1  }\int _ { \Sigma } e ^ {\ \beta u^ 2} dv_g<+\infty, \ \forall \ \beta\leq 4\pi.
    \end{equation}
Moreover, $4\pi$ is called the best
constant for this inequality in the sense that when $\beta> 4\pi$, all integrals in (\ref{T-MM}) are still finite, but the supremum is infinite.
Based on the works of Ding-Jost-Li-Wang \cite{DJLW} and Adimurthi-Struwe \cite{A-Struwe}, Li \cite{Li-JPDE,Li-Sci} proved the existence of extremals for the supremum in (\ref{T-MM}).
When $(\Sigma, g)$ is a compact Riemann surface with smooth boundary $\partial\Sigma$,
Yang \cite{Yang2006IJM} obtained the same inequality as (\ref{T-MM}), namely
\begin{equation}\label{2006Y}
  \sup _ {u \in W ^ { 1,2 } ( \Sigma, g), \int _\Sigma  u dv_g = 0,  \| \nabla_g u \| _ { 2 } \leq 1  }\int _ { \Sigma } e ^ {\ \beta u^ 2} dv_g<+\infty, \ \forall \ \beta\leq 2\pi.
    \end{equation}
This inequality is sharp in the sense that if $\beta> 2\pi$, all integrals in (\ref{2006Y}) are still finite, but the supremum is infinite. Furthermore, the supremum in (\ref{2006Y}) can be attained.
\\

In view of the inequality (\ref{T-LY}) in the Euclidean space, we strengthen (\ref{2006Y}) on $(\Sigma, g)$ with smooth boundary $\partial\Sigma$. Precisely we have the following:
\begin{theorem}\label{T1}
  Let $(\Sigma, g)$ be a compact Riemann surface with smooth boundary $\partial\Sigma$
  and
  \begin{eqnarray}\label{la}
\lambda_1(\Sigma)= \inf _ { u \in W^{1,2} ( \Sigma, g) , \int _ { \Sigma } u dv_g = 0 , u \not\equiv 0 } \frac { \|\nabla_g u\|^2_2} {\|u\|_2^2}
\end{eqnarray}
be the first eigenvalue of the Laplace-Beltrami operator $\Delta _ { g}$ with respect to the zero mean value condition.
Denote a function space
\begin{eqnarray*}\label{S}
\mathcal { S } = \left\{ u \in W ^ { 1,2 } ( \Sigma, g) : \int _\Sigma  {u}\ dv_g = 0, \ \| \nabla_g u \| _ { 2 } \leq 1  \right\}
\end{eqnarray*}
and
\begin{eqnarray*}
F_{\alpha}^{\beta}(u)= \int_{\Sigma} e^{ \beta u^{2}\left(1+\alpha\|u\|_2^{2}\right) }d v_g.
\end{eqnarray*}
Then there hold\\
(i) for any $ \alpha \geq\lambda_1(\Sigma),$ $\sup _{u \in \mathcal{S}}
F_{\alpha}^{2\pi}(u)=+\infty$;\\
(ii) for any $0 \leq \alpha<\lambda_1(\Sigma)$, $\sup _{u \in \mathcal{S}} F_{\alpha}^{2\pi}(u)<+\infty$;\\
(iii) for sufficiently small $\alpha>0$, $\sup _{u \in \mathcal{S}} F_{\alpha}^{2\pi}(u)$ can be attained by some function $u_{\alpha} \in C^\infty\left(\overline{\Sigma}\right)\cap \mathcal {S}$.
\end{theorem}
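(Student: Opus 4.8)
The plan is to follow the scheme of Adimurthi--Druet \cite{AD} and of Lu--Yang \cite{Lu-Yang}: to establish (i) by an explicit family of test functions, and (ii)--(iii) together through the analysis of subcritical maximizers combined with a blow-up argument. For (i) we would take a first eigenfunction $u_1$ for (\ref{la}), normalized by $\|\nabla_g u_1\|_2=1$, so that $-\Delta_g u_1=\lambda_1(\Sigma)u_1$ in $\Sigma$, $\partial_\nu u_1=0$ on $\partial\Sigma$ and $\|u_1\|_2^2=1/\lambda_1(\Sigma)$. Since $u_1\not\equiv0$ and $\partial_\nu u_1=0$ on $\partial\Sigma$, $u_1$ cannot vanish identically on $\partial\Sigma$, so after a change of sign there is $p\in\partial\Sigma$ with $u_1(p)>0$. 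In boundary isothermal coordinates at $p$ (in which the Dirichlet integral is conformally invariant) we would glue a half-plane Moser function $m_\epsilon$ --- equal to $\sqrt{\log(1/\epsilon)/\pi}$ on the plateau $\{|x|\le\epsilon R\}$, logarithmic on $\{\epsilon R\le|x|\le R\}$, zero outside, with $\|\nabla_g m_\epsilon\|_2^2=1$ --- onto $b_\epsilon u_1$; after subtracting the (negligible) mean and renormalizing the gradient we obtain $w_\epsilon\in\mathcal S$ of the form $w_\epsilon=a_\epsilon m_\epsilon+b_\epsilon u_1+\mathrm{const}$ with $a_\epsilon^2+b_\epsilon^2=1+o(1)$ and $\|w_\epsilon\|_2^2=b_\epsilon^2/\lambda_1(\Sigma)+o(b_\epsilon^2)$, the cross terms being $O((\log(1/\epsilon))^{-1/2})$ since $\int_\Sigma m_\epsilon\,dv_g$ and $\langle\nabla_g m_\epsilon,\nabla_g u_1\rangle_2=\lambda_1(\Sigma)\int_\Sigma m_\epsilon u_1\,dv_g$ are of that order. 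On the plateau $w_\epsilon=a_\epsilon\sqrt{\log(1/\epsilon)/\pi}+b_\epsilon u_1(p)+o(1)$, hence there
\begin{equation*}
w_\epsilon^2\bigl(1+\alpha\|w_\epsilon\|_2^2\bigr)\ \ge\ \frac{\log(1/\epsilon)}{\pi}\,(1-b_\epsilon^2)\Bigl(1+\frac{\alpha b_\epsilon^2}{\lambda_1(\Sigma)}\Bigr)+2a_\epsilon b_\epsilon u_1(p)\sqrt{\frac{\log(1/\epsilon)}{\pi}}+O(1).
\end{equation*}
When $\alpha>\lambda_1(\Sigma)$, fixing $0<b_\epsilon^2<1-\lambda_1(\Sigma)/\alpha$ makes $(1-b_\epsilon^2)(1+\alpha b_\epsilon^2/\lambda_1(\Sigma))=1+\eta$ with $\eta>0$, so the right-hand side is $\ge\frac{\log(1/\epsilon)}{\pi}(1+\eta/2)$ and $F_\alpha^{2\pi}(w_\epsilon)\ge cR^2\epsilon^{-\eta}\to+\infty$; for the borderline case $\alpha=\lambda_1(\Sigma)$ one lets instead $b_\epsilon\to0$ slowly (e.g.\ $b_\epsilon=(\log(1/\epsilon))^{-1/6}$), so the first term is $\ge\frac{\log(1/\epsilon)}{\pi}-o(1)$ while the cross term $\sim b_\epsilon u_1(p)\sqrt{\log(1/\epsilon)}\to+\infty$ dominates all errors and $F_\alpha^{2\pi}(w_\epsilon)\to+\infty$ again.

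For (ii)--(iii) fix $\alpha\in[0,\lambda_1(\Sigma))$, and for each $\epsilon\in(0,2\pi)$ we would first show $\sup_{\mathcal S}F_\alpha^{2\pi-\epsilon}$ is attained: for a maximizing sequence $v_j$ with $\|\nabla_g v_j\|_2=1$ and weak limit $v_0$, if $v_0=0$ then $\|v_j\|_2\to0$, so eventually $(2\pi-\epsilon)(1+\alpha\|v_j\|_2^2)<2\pi$ and (\ref{2006Y}) forces $F_\alpha^{2\pi-\epsilon}(v_j)\to|\Sigma|<\sup_{\mathcal S}F_\alpha^{2\pi-\epsilon}$, a contradiction; hence $v_0\neq0$, and then the Poincar\'e inequality $\|\nabla_g v_0\|_2^2\ge\lambda_1(\Sigma)\|v_0\|_2^2$ together with $\alpha<\lambda_1(\Sigma)$ gives $(1+\alpha\|v_0\|_2^2)(1-\|\nabla_g v_0\|_2^2)<1$, so a Lions-type concentration lemma for (\ref{2006Y}) makes $e^{(2\pi-\epsilon)(1+\alpha\|v_j\|_2^2)v_j^2}$ uniformly integrable, whence $v_j\to v_0=:u_\epsilon$ strongly and $u_\epsilon$ is a maximizer with $\|\nabla_g u_\epsilon\|_2=1$, satisfying an Euler--Lagrange equation $-\Delta_g u_\epsilon=\lambda_\epsilon^{-1}\bigl[(1+\alpha\|u_\epsilon\|_2^2)u_\epsilon e^{P_\epsilon}+\alpha\bigl(\int_\Sigma u_\epsilon^2e^{P_\epsilon}dv_g\bigr)u_\epsilon\bigr]-\mu_\epsilon$ in $\Sigma$ with $\partial_\nu u_\epsilon=0$ on $\partial\Sigma$, where $P_\epsilon=(2\pi-\epsilon)(1+\alpha\|u_\epsilon\|_2^2)u_\epsilon^2$ and $\lambda_\epsilon,\mu_\epsilon$ are the obvious normalizing constant and Lagrange multiplier; elliptic theory gives $u_\epsilon\in C^\infty(\overline\Sigma)$. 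Set $c_\epsilon=\max_{\overline\Sigma}|u_\epsilon|$ and let $\epsilon\to0$. If, along a subsequence, $c_\epsilon$ stays bounded, elliptic estimates give $u_\epsilon\to u_\alpha$ in $C^1(\overline\Sigma)$ and, since $\sup_{\mathcal S}F_\alpha^{2\pi-\epsilon}\uparrow\sup_{\mathcal S}F_\alpha^{2\pi}$ as $\epsilon\downarrow0$, $u_\alpha$ attains $\sup_{\mathcal S}F_\alpha^{2\pi}$; if $c_\epsilon\to\infty$ but the weak limit $u_0$ is nonzero, the Poincar\'e/Lions argument again yields $F_\alpha^{2\pi-\epsilon}(u_\epsilon)\to F_\alpha^{2\pi}(u_0)$, so $u_0$ attains it. In both of these cases (ii) and (iii) hold, so from now on we may assume $c_\epsilon\to\infty$ and $u_\epsilon\rightharpoonup0$ (hence $\|u_\epsilon\|_2\to0$, making the $\alpha$-terms lower order near the concentration point).

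Writing $c_\epsilon=|u_\epsilon(x_\epsilon)|$, since $\|u_\epsilon\|_2\to0$ the effective exponent $2\pi(1+\alpha\|u_\epsilon\|_2^2)\to2\pi$, below the interior critical level $4\pi$, so $x_\epsilon\to p$ with $p\in\partial\Sigma$. Rescaling in boundary isothermal coordinates at $x_\epsilon$ by $r_\epsilon^2=\lambda_\epsilon c_\epsilon^{-2}e^{-P_\epsilon(x_\epsilon)}$, the functions $c_\epsilon(u_\epsilon(x_\epsilon+r_\epsilon\,\cdot\,)-c_\epsilon)$ should converge in $C^1_{\mathrm{loc}}(\overline{\mathbb R^2_+})$ to the half-plane Liouville profile $\varphi$ solving $-\Delta\varphi=e^{4\pi\varphi}$ with Neumann condition on $\partial\mathbb R^2_+$ and $\int_{\mathbb R^2_+}e^{4\pi\varphi}<\infty$, which after even reflection is classified and satisfies $\int_{\mathbb R^2_+}e^{4\pi\varphi}=1$; meanwhile $c_\epsilon u_\epsilon\to G_\alpha$ in $C^1_{\mathrm{loc}}(\overline\Sigma\setminus\{p\})$, where
\begin{equation*}
-\Delta_g G_\alpha-\alpha G_\alpha=\delta_p-\frac{1}{|\Sigma|}\ \text{ in }\Sigma,\qquad \partial_\nu G_\alpha=0\ \text{ on }\partial\Sigma,\qquad \int_\Sigma G_\alpha\,dv_g=0,
\end{equation*}
a problem uniquely solvable exactly because $\alpha<\lambda_1(\Sigma)$, with $G_\alpha(x)=-\tfrac1\pi\log r_g(x,p)+A_p(\alpha)+o(1)$ near $p$ (the coefficient $-\tfrac1\pi$ coming from the doubling at the boundary) and $A_p(\alpha)\to A_p(0)$ as $\alpha\to0$. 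A capacity estimate on the neck region together with a Pohozaev identity for $u_\epsilon$ on a small half-disc would then give the crucial bound
\begin{equation*}
\sup_{u\in\mathcal S}F_\alpha^{2\pi}(u)=\lim_{\epsilon\to0}F_\alpha^{2\pi-\epsilon}(u_\epsilon)\le|\Sigma|+\frac{\pi}{2}\,e^{1+2\pi A_p(\alpha)}<+\infty,
\end{equation*}
which establishes (ii) for all $\alpha\in[0,\lambda_1(\Sigma))$.

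To obtain (iii) it remains to exclude this blow-up scenario for $\alpha$ small. At $\alpha=0$, Yang's proof of (\ref{2006Y}) produces a Carleson--Chang type family (a Moser bubble at $p$ corrected by $c^{-1}G_0$, renormalized in $\mathcal S$ with mean zero) for which $\sup_{\mathcal S}F_0^{2\pi}>|\Sigma|+\frac{\pi}{2}e^{1+2\pi A_p(0)}$ strictly; carrying out the same construction with $G_\alpha$ in place of $G_0$, the corresponding lower bound exceeds $|\Sigma|+\frac{\pi}{2}e^{1+2\pi A_p(\alpha)}$ by a quantity depending continuously on $\alpha$ and positive at $\alpha=0$, hence positive for all sufficiently small $\alpha>0$; this contradicts the previous display, so for such $\alpha$ blow-up cannot occur, $c_\epsilon$ stays bounded, and $\sup_{\mathcal S}F_\alpha^{2\pi}$ is attained by some $u_\alpha\in C^\infty(\overline\Sigma)\cap\mathcal S$. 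The crux is precisely this blow-up analysis, carried out up to the boundary and with the nonlocal term $\alpha\bigl(\int_\Sigma u_\epsilon^2e^{P_\epsilon}dv_g\bigr)u_\epsilon$ present: locating the boundary blow-up point, the sharp $C^1_{\mathrm{loc}}$ bubble convergence, the half-space Liouville classification with mass exactly $1$, the neck capacity estimate, and above all the Pohozaev identity delivering the constant $|\Sigma|+\frac{\pi}{2}e^{1+2\pi A_p(\alpha)}$. One must moreover track the degeneration of $G_\alpha$ and $A_p(\alpha)$ as $\alpha\uparrow\lambda_1(\Sigma)$, which is exactly why part (iii) is confined to small $\alpha$, as in \cite{Lu-Yang} and \cite{C-C}.
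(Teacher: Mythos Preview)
Your plan matches the paper's almost step for step: test functions built from a first Neumann eigenfunction plus a boundary Moser bubble for (i); subcritical maximizers, boundary blow-up, the half-plane Liouville bubble with mass $1$, convergence of $c_\epsilon u_\epsilon$ to the Green function $G_\alpha$ with the $-\tfrac{1}{\pi}\log r$ singularity, and the capacity upper bound $|\Sigma|+\tfrac{\pi}{2}e^{1+2\pi A_p(\alpha)}$ for (ii); and a Carleson--Chang test family built on $G_\alpha$ beating that bound for small $\alpha$ to rule out blow-up for (iii). Two small remarks: the paper obtains the sharp upper bound purely from the neck capacity estimate (no Poho\v{z}aev identity is invoked), and its Euler--Lagrange equation carries the extra normalization factor $(1+2\alpha\|u_\epsilon\|_2^2)^{-1}$ that your version omits---harmless here since $\|u_\epsilon\|_2\to0$ in the blow-up scenario, but worth writing correctly.
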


For the proof,
we employ the method of blow-up analysis, which was originally used by Carleson-Chang\cite{C-C},
Ding-Jost-Li-Wang \cite{DJLW}, Adimurthi-Struwe \cite{A-Struwe}, Li \cite{Li-JPDE}, 
and Yang \cite{Yang2007, Yang-JDE-15}.
For related works, we refer the reader to
Adimurthi-Druet \cite{AD},
do \'O-de Souza \cite{do-de2014,do-de2016}, Nguyen \cite{N2017,N2018}, Li-Yang \cite{L-Y}, Zhu \cite{Zhu}, Fang-Zhang \cite{F-Z}, Yang-Zhu \cite{Yang-Zhu2018,Yang-Zhu2019} and Csat\'o-Nguyen-Roy \cite{C-N-R}.
We should point out that the blow-up occurs on the boundary $\partial\Sigma$ in our case.
The key ingredient in the proof of our theorem is the isothermal coordinate system on $\partial\Sigma$.
Though such coordinates have been used by many authors  (see for example Li-Liu \cite{Li-Liu} and Yang \cite{Yang2006IJM, Yang2006PJM, Yang-2007}), the proof of its existence around has just been provided by Yang-Zhou \cite{Yang-Zhou} via Riemann mapping theorems involving the boundary.

The remaining part of this paper will be organized as follows: In Section 2, we prove (Theorem \ref{T1}, ($i$)) by constructing test functions; in Section 3, we prove (Theorem \ref{T1}, ($ii$)) by using blow-up analysis; in Section 4, we construct a sequence of functions to show (Theorem \ref{T1}, ($iii$)) holds.
Hereafter we do not distinguish the sequence and the subsequence; moreover, we often denote various constants by the same $C$.

\section{The case of $\alpha\geq\lambda_1(\Sigma)$}

In this section, we select test functions to prove Theorem \ref{T1} ($i$).
Let $\lambda_1(\Sigma)$ be defined by (\ref{la}) and $\alpha \geq \lambda_1(\Sigma)$.
From a direct method of variation,
one obtains that
there exists some function $u_0 \in \mathcal { S } $, such that
\begin{eqnarray}\label{s3}
\lambda_1(\Sigma) =\left\|\nabla_g u_0\right\|_{2}^{2}.
\end{eqnarray}
By a direct calculation, we derive that $u_0$ satisfies the Euler-Lagrange equation
\begin{eqnarray}\label{e-ua}
	\left\{
\begin{aligned}
&\Delta_g u _ 0 =\lambda_1(\Sigma)\,u _0 \,\,\, \mathrm{in}\,\,\, \Sigma ,\\
&{ \frac { \partial u _0 } { \partial \mathbf{n} } =0 \,\,\, \mathrm{ on } \,\,\, \partial \Sigma },\\
&\int _ { \Sigma } u_0 \,dv_g = 0,\ \int _{\Sigma}  {u_0^2} \,dv_g=1,
 \end{aligned} \right.
\end{eqnarray}
where $\mathbf{n}$ denotes the outward unit normal vector on $\partial\Sigma$. Applying elliptic estimates to (\ref{e-ua}), we obtain $u_0 \in \mathcal { S } \cap C^{\infty}\left(\overline{\Sigma}\right)$.
Consequently, there exist a point $ x_0\in \partial\Sigma$ with $u _ { 0 } ( x_0 ) > 0 $ and
a neighborhood $U$ of $x_0$ with $u_ { 0 }(x) \geq u_ { 0 } ( x_0 ) / 2$ in $U$ .
Let $\delta = {\left( t _\epsilon  \sqrt { -\log  \epsilon  } \right)}^{-1}$, where $t _ { \epsilon } > 0$ such that $-t _ { \epsilon } ^ { 2 } \log  { \epsilon } \rightarrow + \infty$ and $t _ { \epsilon } ^ { 2 } \sqrt { -\log { \epsilon } } \rightarrow 0$ as $\epsilon\rightarrow0$.
We take an isothermal coordinate system $\left(\phi^{-1}\left(\mathbb{B}_\delta^+\right),\phi\right)$ such that $\phi(x_0) = 0$ and $\phi^{-1}\left(\mathbb{B}_\delta^+\right)\subset U $.
In such coordinates, the metric $g$ has the representation $g = e^{2f}\left (dx_1^2 +dx_2^2 \right)$ and $f$ is a smooth function with $f(0)  = 0$.

On $\overline{\mathbb{B}_{\delta}^{+}},$ we define a sequence of functions
\begin{eqnarray*}
\tilde{u}_{\epsilon}(x)=\left\{
\linespread{1.5}\selectfont
\begin{aligned}
&\sqrt{\frac{-\log \epsilon}{2 \pi} }, \ \ \ \ \ \ \ \ \ \ \  \  |x| \leq \delta \sqrt{\epsilon}, \\
&\sqrt{\frac{-2}{\pi \log \epsilon}} \log \frac{\delta}{|x|},\  \delta \sqrt{\epsilon}<|x| \leq \delta.
\end{aligned}\right.
\end{eqnarray*}
And we set
\begin{eqnarray}\label{s4}
u_{\epsilon}=\left\{\begin{array}{ll}
\tilde{u}_{\epsilon} \circ \phi & \text { in }\ \ \phi ^{-1}\left(\overline{\mathbb{B}_{\delta}^{+}}\right), \\
s_{\epsilon}\, \varphi & \text { in } \ \ \Sigma \backslash \phi ^{-1}\left(\overline{\mathbb{B}_{\delta}^{+}}\right),
\end{array}\right.
\end{eqnarray}
where $\varphi \in C_{0}^{\infty}\left(\Sigma \backslash \phi^{-1}(\mathbb{B}_\delta)\right)$ and $s_{\epsilon}$ is a real number such that $\int_{\Sigma} u_{\epsilon} \,dv_g=0$.
Set $v_{\epsilon}=u_{\epsilon}+t_{\epsilon} u_{0}$. According to (\ref{s3})-(\ref{s4}), we have
\begin{eqnarray}\label{s5}
\left\|v_{\epsilon}\right\|_{2}^{2}=\left\|u_{\epsilon}\right\|_{2}^{2}+t_{\epsilon}^{2}\left\|u_{0}\right\|_{2}^{2}+2 t_{\epsilon} \int_{\Sigma} u_{\epsilon} u_{0} \,dv_g
= t_{\epsilon}^{2}+2 t_{\epsilon} \int_{\Sigma} u_{\epsilon} u_{0} \,dv_g+O\left(\frac{-1}{ \log\epsilon}\right) \end{eqnarray}
and
\begin{eqnarray}\label{s6}
\left\|\nabla_g v_{\epsilon}\right\|_{2}^{2}
= 1+\lambda_1(\Sigma) t_{\epsilon}^{2}+2 \lambda_1(\Sigma) t_{\epsilon} \int_{\Sigma} u_{\epsilon} u_{0} \,dv_g+O\left(\frac{-1}{ \log\epsilon}\right).
\end{eqnarray}
Take $v_{\epsilon}^{*} =v_{\epsilon} /\lVert \nabla_gv_{\epsilon} \rVert _{2}^{2}\in\mathcal{S}$.
From $\alpha \geq \lambda_1(\Sigma)$ and (\ref{s4})-(\ref{s6}), we have that on $\phi^{-1}\left(\mathbb{B}_{\delta \sqrt{\epsilon}}^{+}\right)$
\begin{eqnarray*}
&\ &{2 \pi v_{\epsilon}^{*2} \left(1+\alpha\left\|v_{\epsilon}^*\right\|_{2}^{2}
\right)}\\
&=&
2 \pi {v_{\epsilon}^{2}}\frac{1}{\left\|\nabla_g v_{\epsilon}\right\|_{2}^{2}} \left(1+\alpha\frac{\left\|v_{\epsilon}\right\|_{2}^{2}}
{\left\|\nabla_g v_{\epsilon}\right\|_{2}^{2}}\right)\\
 & \geq &2 \pi\left(t_{\epsilon}^{2} u_{0}^{2}-\frac{\log \epsilon}{2 \pi} +2 t_{\epsilon}\sqrt{\frac{-\log \epsilon}{2 \pi} } u_{0}\right)
 \left(1+\left(\alpha-\lambda_1(\Sigma)\right)\left(t_{\epsilon}^{2}+2 t_{\epsilon} \int_\Sigma u_{\epsilon} u_{0}\,dv_g\right)+o\left(\frac{t_{\epsilon}}{\sqrt{-\log \epsilon}}\right)\right)
 \\
 & \geq &\left(2 \pi t_{\epsilon}^{2} u_{0}^{2}-\log \epsilon +4 \pi t_{\epsilon}\sqrt{\frac{-\log \epsilon}{2 \pi} } u_{0}\right)\left(1+o\left(\frac{t_{\epsilon}}{\sqrt{-\log \epsilon}}\right)\right) \\
& \geq& -\log {\epsilon}+t_{\epsilon}\sqrt{-\log \epsilon}\left(\sqrt{8 \pi} u_{0}+o(1)\right).
\end{eqnarray*}
Hence there holds
\begin{eqnarray*}
\int_{\Sigma} e^{2 \pi v_{\epsilon}^{*2} \left(1+\alpha\left\|v_{\epsilon}^*\right\|_{2}^{2}
\right)} \,dv_g & \geq& \int_{\phi^{-1}\left(\mathbb{B}_{\delta \sqrt{\epsilon}}^{+}\right)} \frac{1}{\epsilon} e^{t_{\epsilon}\sqrt{-\log \epsilon}\left(\sqrt{8 \pi} u_{0}+o(1)\right)} \,dv_g \\
& \geq &C(\delta) e^{t_{\epsilon} \sqrt{-\log {\epsilon}}\left(\sqrt{2 \pi} u_{0}(x_0)+o(1)\right)}
\end{eqnarray*}
for some positive constant $C(\delta) .$ In view of $u_{0}(x_0)>0,$ we get $ \sup _ { u \in \mathcal {S} } F_\alpha^{2\pi}(u) \ge \lim_{\epsilon \rightarrow 0}F_\alpha^{2\pi}\left(v_{\epsilon}^{*}\right)=+\infty$.
This completes the proof of Theorem \ref{T1} ($i$).

\section{The case of\, $0\leq\alpha<\lambda_1(\Sigma)$}
In this section, we will prove Theorem \ref{T1} ($ii$) in three steps:
firstly, we consider the existence of maximizers for subcritical functionals and give the corresponding Euler-Lagrange equation;
secondly, we deal with the asymptotic behavior of the maximizers through blow-up analysis;
finally, we deduce an upper bound of the supremum $\sup_{ u \in \mathcal { S } }F_{\alpha}^{2\pi}(u)$ under the assumption that blow-up occurs.

\subsection*{\textbf{Step 1.} Existence of maximizers for subcritical functionals}

Using the similar proof of (\cite{Lu-Yang}, Step 1), we have the following
\begin{lemma}\label{L1}
For any $\epsilon>0$, there exists some function $ u_{\epsilon} \in \mathcal {S}\cap C^{\infty}\left(\overline{ \Sigma}\right)$ with $\|\nabla_g u_{\epsilon}\|_2^2=1$, such that
\begin{eqnarray*}
\sup_{u \in\mathcal { S}}F_{\alpha}^{2\pi-\epsilon}(u)
=F_{\alpha}^{2\pi-\epsilon}(u_\epsilon).
\end{eqnarray*}
Moreover, $u_{\epsilon}$ satisfies the Euler-Lagrange equation
\begin{eqnarray}\label{e-u}
	\left\{ \begin{aligned}
	& \frac{\partial u_{\epsilon}}{\partial \mathbf{n}}=0\,\,\mathrm{on\,\,}{\partial \Sigma},\\
	&\Delta_g u_{\epsilon}=\frac{\beta _{\epsilon}}{\lambda _{\epsilon}}u_{\epsilon}e^{\alpha _{\epsilon}u_{\epsilon}^{2}}+\gamma _{\epsilon}u_{\epsilon}-\frac{\mu _{\epsilon}}{\lambda _{\epsilon}}\,\,\mathrm{in\,\,}\Sigma,
\\
	&\alpha _{\epsilon}=\left( 2\pi -\epsilon \right) \left( 1+\alpha\|u_{\epsilon}\|_2^2 \right),\ \
	\beta _{\epsilon}=\frac{1+\alpha \|u_{\epsilon}\|_2^2}{1+2\alpha \|u_{\epsilon}\|_2^2},\\
&\gamma _{\epsilon}=\frac{\alpha}{1+2\alpha \|u_{\epsilon}\|_2^2},\ \
	\lambda _{\epsilon}=\int_{\Sigma}{u}_{\epsilon}^{2}e^{\alpha _{\epsilon}u_{\epsilon}^{2}}\,dv_g,\ \ \mu _{\epsilon}=\frac{\beta _{\epsilon}}{{\rm Area}(\Sigma )}\int_{\Sigma}{u}_{\epsilon}e^{\alpha _{\epsilon}u_{\epsilon}^{2}}\,dv_g.
\end{aligned} \right.
	\end{eqnarray}
\end{lemma}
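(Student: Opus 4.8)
The strategy is to carry Step 1 of Lu--Yang \cite{Lu-Yang} over to $(\Sigma,g)$; the geometric inputs needed are the Poincar\'e inequality built into (\ref{la}), namely $\|u\|_2^2\le\|\nabla_g u\|_2^2/\lambda_1(\Sigma)$ whenever $\int_\Sigma u\,dv_g=0$, the compact embeddings $W^{1,2}(\Sigma,g)\hookrightarrow L^q(\Sigma,g)$ for all $q\in[1,\infty)$, the boundary Trudinger--Moser inequality (\ref{2006Y}) together with its ``fixed function'' form ($e^{q v^2}\in L^1(\Sigma,g)$ for every $v\in W^{1,2}(\Sigma,g)$ and every $q>0$), and $L^p$ and Schauder estimates up to $\partial\Sigma$ for the Neumann problem. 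Fix $\epsilon\in(0,2\pi)$ and let $\{u_j\}\subset\mathcal S$ be a maximizing sequence for $F_\alpha^{2\pi-\epsilon}$. Since $\|\nabla_g u_j\|_2\le1$ and $\int_\Sigma u_j\,dv_g=0$, the Poincar\'e inequality gives $\|u_j\|_2^2\le1/\lambda_1(\Sigma)$, so $\{u_j\}$ is bounded in $W^{1,2}(\Sigma,g)$; passing to a subsequence, $u_j\rightharpoonup u_\epsilon$ in $W^{1,2}(\Sigma,g)$, $u_j\to u_\epsilon$ in every $L^q$ and a.e. Hence $\int_\Sigma u_\epsilon\,dv_g=0$, $\|\nabla_g u_\epsilon\|_2\le1$ by weak lower semicontinuity, so $u_\epsilon\in\mathcal S$, and $\|u_j\|_2^2\to\|u_\epsilon\|_2^2$.

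The crucial step is to pass to the limit inside the exponential. With $\beta_j:=(2\pi-\epsilon)(1+\alpha\|u_j\|_2^2)\to\beta_\infty:=(2\pi-\epsilon)(1+\alpha\|u_\epsilon\|_2^2)$, the integrands $e^{\beta_j u_j^2}$ converge a.e.\ to $e^{\beta_\infty u_\epsilon^2}$, so by Vitali's theorem it suffices to bound $\{e^{\beta_j u_j^2}\}$ in some $L^p(\Sigma,g)$, $p>1$, uniformly in $j$. If $u_\epsilon\equiv0$ then $\beta_j\to2\pi-\epsilon<2\pi$ and (\ref{2006Y}) applied to $\{u_j\}$ suffices. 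If $u_\epsilon\not\equiv0$, put $w_j:=u_j-u_\epsilon\rightharpoonup0$, so $\|\nabla_g w_j\|_2^2=1-\|\nabla_g u_\epsilon\|_2^2+o(1)=:\delta_0+o(1)$ with $0<\delta_0<1$; from $u_j^2\le(1+\eta)w_j^2+(1+\eta^{-1})u_\epsilon^2$, (\ref{2006Y}) applied to $w_j/\|\nabla_g w_j\|_2$, and the ``fixed function'' integrability of $u_\epsilon$, one obtains a uniform $L^p$ bound ($p>1$) as soon as $p(1+\eta)\beta_j\|\nabla_g w_j\|_2^2<2\pi$ for large $j$. This is precisely where both hypotheses enter: by Poincar\'e $\|u_\epsilon\|_2^2\le(1-\delta_0)/\lambda_1(\Sigma)$, so $\alpha\le\lambda_1(\Sigma)$ yields $\delta_0\beta_\infty\le(2\pi-\epsilon)\big(\delta_0+(\alpha/\lambda_1(\Sigma))\delta_0(1-\delta_0)\big)\le(2\pi-\epsilon)(\delta_0+(1-\delta_0))=2\pi-\epsilon<2\pi$, leaving room to choose $p,\eta$ (equivalently, one may invoke a Lions-type concentration--compactness estimate). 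Therefore $F_\alpha^{2\pi-\epsilon}(u_j)\to F_\alpha^{2\pi-\epsilon}(u_\epsilon)$ and $u_\epsilon$ is a maximizer. Since $\sup_{\mathcal S}F_\alpha^{2\pi-\epsilon}>\mathrm{Area}(\Sigma)=F_\alpha^{2\pi-\epsilon}(0)$ (test with any nonconstant element of $\mathcal S$), $u_\epsilon\not\equiv0$; and if $t:=\|\nabla_g u_\epsilon\|_2<1$, then $u_\epsilon/t\in\mathcal S$ and $(u_\epsilon/t)^2(1+\alpha\|u_\epsilon/t\|_2^2)>u_\epsilon^2(1+\alpha\|u_\epsilon\|_2^2)$ wherever $u_\epsilon\ne0$, contradicting maximality; hence $\|\nabla_g u_\epsilon\|_2=1$.

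It remains to derive (\ref{e-u}). The functional $F_\alpha^{2\pi-\epsilon}$ is $C^1$ on $W^{1,2}(\Sigma,g)$, and $u_\epsilon$ maximizes it subject to $G_1(u):=\|\nabla_g u\|_2^2=1$ and $G_2(u):=\int_\Sigma u\,dv_g=0$, with $\big(dG_1(u_\epsilon),dG_2(u_\epsilon)\big)$ onto $\mathbb{R}^2$ (test with $u_\epsilon$ and with the constant $1$); so the Lagrange multiplier rule gives $\tau,\nu\in\mathbb{R}$ with $dF_\alpha^{2\pi-\epsilon}(u_\epsilon)[\psi]=2\tau\int_\Sigma\langle\nabla_g u_\epsilon,\nabla_g\psi\rangle\,dv_g+\nu\int_\Sigma\psi\,dv_g$ for all $\psi\in C^\infty(\overline\Sigma)$, where
\[
dF_\alpha^{2\pi-\epsilon}(u_\epsilon)[\psi]=2\alpha_\epsilon\int_\Sigma u_\epsilon\psi\,e^{\alpha_\epsilon u_\epsilon^2}\,dv_g+2(2\pi-\epsilon)\alpha\lambda_\epsilon\int_\Sigma u_\epsilon\psi\,dv_g,
\]
the second term coming from differentiating the $\|u\|_2^2$ in the exponent and $\alpha_\epsilon,\lambda_\epsilon$ being as in (\ref{e-u}). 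Testing this identity with $\psi=u_\epsilon$ and using $\|\nabla_g u_\epsilon\|_2=1$, $\int_\Sigma u_\epsilon\,dv_g=0$ fixes $\tau=(2\pi-\epsilon)\lambda_\epsilon(1+2\alpha\|u_\epsilon\|_2^2)>0$; varying $\psi$ freely then yields, as the natural boundary condition, $\partial u_\epsilon/\partial\mathbf{n}=0$ on $\partial\Sigma$ and, after dividing by $\tau$, the interior equation of (\ref{e-u}) with $\beta_\epsilon,\gamma_\epsilon$ as stated, while integrating that equation over $\Sigma$ (using $\int_\Sigma\Delta_g u_\epsilon\,dv_g=0$ and $\int_\Sigma u_\epsilon\,dv_g=0$) identifies $\mu_\epsilon=\frac{\beta_\epsilon}{\mathrm{Area}(\Sigma)}\int_\Sigma u_\epsilon e^{\alpha_\epsilon u_\epsilon^2}\,dv_g$. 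Finally, regularity follows by a bootstrap: $u_\epsilon\in W^{1,2}(\Sigma,g)$ forces $e^{\alpha_\epsilon u_\epsilon^2}\in L^p(\Sigma,g)$ for all $p<\infty$, so the right-hand side of (\ref{e-u}) lies in every $L^p$, whence $u_\epsilon\in W^{2,p}(\Sigma,g)$ for all $p$ by Neumann $L^p$ estimates, so $u_\epsilon\in C^{1,\gamma}(\overline\Sigma)\subset L^\infty$, and iterating Schauder estimates gives $u_\epsilon\in C^\infty(\overline\Sigma)$. The main obstacle is the uniform integrability in the limit passage when $u_\epsilon\not\equiv0$, i.e.\ excluding concentration of $\{u_j\}$; this is exactly where the strict inequality $\alpha<\lambda_1(\Sigma)$ together with $\epsilon>0$ is indispensable, and once the splitting above is in hand the remaining arguments are routine, following \cite{Lu-Yang}.
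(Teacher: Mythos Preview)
Your proposal is correct and follows essentially the same route as the paper, which simply writes ``Using the similar proof of (\cite{Lu-Yang}, Step 1)'' without details; you have faithfully reconstructed that argument on $(\Sigma,g)$, including the direct-method compactness via (\ref{2006Y}) and the Lions-type splitting $u_j=u_\epsilon+w_j$, the saturation $\|\nabla_g u_\epsilon\|_2=1$, the Lagrange-multiplier derivation of (\ref{e-u}), and the Neumann bootstrap to $C^\infty(\overline\Sigma)$. One cosmetic point: the identity $\|\nabla_g w_j\|_2^2=1-\|\nabla_g u_\epsilon\|_2^2+o(1)$ tacitly assumes the maximizing sequence is normalized to $\|\nabla_g u_j\|_2=1$ (harmless, since $t\mapsto F_\alpha^{2\pi-\epsilon}(tu)$ is nondecreasing), and the case $\delta_0=0$ should be noted separately as giving strong $W^{1,2}$ convergence; neither affects the argument.
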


It follows from Lebesgue's dominated convergence theorem that
    \begin{eqnarray}\label{3}
    {\lim_{\epsilon \rightarrow 0}}
    F_{\alpha}^{2\pi-\epsilon}(u_\epsilon)=\sup_{u \in\mathcal { S}}F_{\alpha}^{2\pi}(u).
    \end{eqnarray}
Seeing the fact of $1+t e^{t}\geq e^{t} $ for any $t \geq 0$, we get
\begin{eqnarray*}
\lambda _\epsilon=\int_{\Sigma}{u}_\epsilon^{2}e^{\alpha _\epsilon u_\epsilon^{2}}\,dv_g
\geq \frac{1}{\alpha_\epsilon}\int_{\Sigma}\left(e^{\alpha_\epsilon u_\epsilon^{2}}-1\right) d v_g,
\end{eqnarray*}
which together with (\ref{3}) leads to
\begin{eqnarray}\label{L2}
  \liminf_{\epsilon \rightarrow 0} \ \lambda _{\epsilon}>0.
\end{eqnarray}
In view of (\ref{e-u}), (\ref{L2}) and $\beta _ { \epsilon } \leq 1$, we obtain
\begin{eqnarray}
\left| \frac{\mu _{\epsilon}}{\lambda _{\epsilon}} \right|
\nonumber &\leq&  \frac{1}{\lambda _{\epsilon}{\rm Area}(\Sigma )}\left( \int_{\left\{ u\in \Sigma :\left| u_{\epsilon} \right|\ge 1 \right\}}{|u_{\epsilon}}|e^{\alpha _{\epsilon}u_{\epsilon}^{2}}\,dv_g+\int_{\left\{ u\in \Sigma :\left| u_{\epsilon} \right|<1 \right\}}{|u_{\epsilon}}|e^{\alpha _{\epsilon}u_{\epsilon}^{2}}\,dv_g \right)  \\
\nonumber &\leq& \frac{1}{\lambda _{\epsilon}{\rm Area}(\Sigma )}\left( \int_{\left\{ u\in \Sigma :\left| u_{\epsilon} \right|\ge 1 \right\}}{u_{\epsilon}^{2}}e^{\alpha _{\epsilon}u_{\epsilon}^{2}}\,dv_g+\int_{\left\{ u\in \Sigma :\left| u_{\epsilon} \right|<1 \right\}}{e^{\alpha _{\epsilon}u_{\epsilon}^{2}}\,dv_g} \right) \\
\nonumber &\leq& \frac{1}{{\rm Area}(\Sigma )}+\frac{e^{\alpha _{\epsilon}}}{\lambda _{\epsilon}}\\
\label{4} &\leq& C.
\end{eqnarray}

\subsection*{\textbf{Step 2.} Blow-up analysis}

Since $u_{\epsilon}$ is bounded in $ W^{1,2}( \Sigma , g)$,
there exists some function $u_0\in W^{1,2}( \Sigma , g)$ such that
\begin{eqnarray}\label{s2}
	\left\{ \begin{aligned}
	&u_{\epsilon}\rightharpoonup u_0\, \, \mathrm{weakly}\, \, \mathrm{in}\, \, W^{1, 2}\left(\Sigma,g \right),\\
	&u_{\epsilon}\rightarrow u_0\, \, \mathrm{strongly}\, \, \mathrm{in}\, \, L^p\left( \Sigma,g\right) , \forall p>1,\\
	&u_{\epsilon}\rightarrow u_0\, \, \mathrm{a. e.} \, \, \mathrm{in}\, \, \Sigma.
	\end{aligned} \right.
	\end{eqnarray}
Then we have $\int _ {  \Sigma } u _ { 0 } dv _ { g } = 0$ and $\|\nabla_gu_0\|_2^2\leq1$.

We set $c_{\epsilon}=|u_{\epsilon}( x_\epsilon )| =\max_{\overline{\Sigma}}|u_{\epsilon}|$.
We first assume that $c_{\epsilon}$ is bounded, which together with elliptic estimates completes the proof of Theorem \ref{T1} ($ii$).
Without loss of generality, we assume
\begin{eqnarray}\label{c}
c_{\epsilon}=u_{\epsilon}( x_\epsilon )\rightarrow+ \infty
\end{eqnarray}
and $x_\epsilon\rightarrow x_0$ as $\epsilon\rightarrow0.$
 Applying maximum principle to (\ref{e-u}), we have $x_0 \in \partial \Sigma$.

Following (\cite{Yang-Zhou}, Lemma 4),
we can take an isothermal coordinate system $(U,\phi)$ near $x_0$, such that $\phi(x_0) = 0$, $\phi (U)=\mathbb { B }_r^+$ and $\phi (U\cap \partial\Sigma)= \partial\mathbb { R } ^ { 2  }_+\cap\mathbb { B }_r$ for some fixed $r>0$, where $\mathbb { B }_r^+=\{(x_1,\ x_2)\in \mathbb { R } ^ 2: x_1^2+x_2^2\leq r^2,\ x_2>0 \}$ and $\mathbb{R}^2_+=\left\{x=(x_1,x_2)\in \mathbb { R } ^ 2:x_2>0\right\}$. In such coordinates, the metric $g$ has the representation $g = e^{2f} \left(dx_1^2 +dx_2^2 \right)$ and $f$ is a smooth function with $f (0) = 0$.
Denote $\tilde{x}_\epsilon=\phi(x_\epsilon)$ and $\tilde{u}_\epsilon=u_\epsilon\circ\phi^{-1}$.
To proceed, we observe an energy concentration phenomenon of $u_\epsilon$.
\begin{lemma}\label{L3}
There hold $u_0=0$ and $ |\nabla_g u_\epsilon|^2\,dv_g\rightharpoonup \delta _{x_0} $ in sense of measure, where $\delta _{x_0}$ stands for the Dirac measure centered at $x_0$.
\end{lemma}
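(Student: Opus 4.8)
The plan is to establish Lemma \ref{L3} by the standard energy-concentration argument adapted to the boundary blow-up setting. First I would argue by contradiction: suppose that $|\nabla_g u_\epsilon|^2\,dv_g$ does \emph{not} concentrate at the single point $x_0$. By the Radon measure compactness, $|\nabla_g u_\epsilon|^2\,dv_g \rightharpoonup \nu$ for some probability-type measure $\nu$ (with total mass $\le 1$), and the negation of concentration means there is a radius $r_0>0$ with $\nu(\overline{B_{r_0}(x_0)}) < 1$, equivalently $\limsup_\epsilon \int_{\Sigma\setminus B_{r_0}(x_0)} |\nabla_g u_\epsilon|^2\,dv_g > 0$; more usefully, one gets $\limsup_\epsilon \int_{B_{r_0}(x_0)} |\nabla_g u_\epsilon|^2\,dv_g < 1$. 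The point of this is a local Trudinger-Moser estimate: on a small half-ball around the boundary blow-up point $x_0$, if the gradient energy stays strictly below the critical threshold, then $e^{\alpha_\epsilon u_\epsilon^2}$ is bounded in $L^p$ for some $p>1$ uniformly in $\epsilon$ (using $\alpha_\epsilon \to 2\pi(1+\alpha\|u_0\|_2^2)$ and that the concentrated mass is $<1$, so after rescaling the local problem one is subcritical).

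The mechanism is as follows. Away from $x_0$, i.e. on $\Sigma\setminus B_{r_0}(x_0)$, one can also show $u_\epsilon$ is uniformly bounded: the gradient energy there is bounded, and using the Euler-Lagrange equation (\ref{e-u}) together with the $L^p$ bound on the nonlinearity $u_\epsilon e^{\alpha_\epsilon u_\epsilon^2}$ (which follows from (\ref{L2}), (\ref{4}) and the subcritical local Trudinger-Moser inequality on the complement after a cutoff), elliptic estimates give $\|u_\epsilon\|_{L^\infty(\Sigma\setminus B_{2r_0}(x_0))} \le C$. Near $x_0$, pulling back to $\mathbb{B}_r^+$ via the isothermal chart $(U,\phi)$, the equation for $\tilde u_\epsilon$ has the form $-\Delta \tilde u_\epsilon = e^{2f}\big(\tfrac{\beta_\epsilon}{\lambda_\epsilon}\tilde u_\epsilon e^{\alpha_\epsilon \tilde u_\epsilon^2} + \gamma_\epsilon \tilde u_\epsilon - \tfrac{\mu_\epsilon}{\lambda_\epsilon}\big)$ with Neumann condition on $\partial\mathbb{R}^2_+\cap\mathbb{B}_r$; I would split $\tilde u_\epsilon$ on a half-ball $\mathbb{B}_{r_1}^+$ into a part solving the equation with the same boundary data (controlled by the boundary $L^\infty$ bound just obtained, via maximum principle / elliptic estimates) plus a part with the right-hand side forcing and zero Neumann data. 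For the forced part, the key is that $\int_{\mathbb{B}_{r_1}^+} |\nabla \tilde u_\epsilon|^2 < 1 - \delta_1$ for some $\delta_1 > 0$ (this is where the no-concentration hypothesis is used), so the Moser-Trudinger inequality — in the half-ball version with Neumann data, which gives exponent $2\pi$ rather than $4\pi$ — shows $e^{\alpha_\epsilon \tilde u_\epsilon^2}$ is $L^p$-bounded for some $p>1$. Bootstrapping then yields $\|\tilde u_\epsilon\|_{L^\infty(\mathbb{B}_{r_1/2}^+)} \le C$, contradicting $c_\epsilon = u_\epsilon(x_\epsilon) \to +\infty$ with $x_\epsilon \to x_0$. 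Hence the measure must concentrate: $|\nabla_g u_\epsilon|^2\,dv_g \rightharpoonup \delta_{x_0}$.

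Once concentration is established, $u_0 = 0$ follows quickly. Since all the gradient energy escapes to $x_0$, for any $\rho > 0$ we have $\int_{\Sigma\setminus B_\rho(x_0)} |\nabla_g u_\epsilon|^2\,dv_g \to 0$; combined with $u_\epsilon \to u_0$ strongly in $L^p$ (from (\ref{s2})) and weak lower semicontinuity, $\nabla_g u_0 \equiv 0$ on $\Sigma\setminus B_\rho(x_0)$ for every $\rho$, hence on all of $\Sigma$, so $u_0$ is constant; the constraint $\int_\Sigma u_0\,dv_g = 0$ forces $u_0 \equiv 0$. (Alternatively and more robustly: if $u_0 \not\equiv 0$ one derives a strict improvement in the Trudinger-Moser bound à la Carleson-Chang using $\|\nabla_g(u_\epsilon - u_0)\|_2^2 \le 1 - \|\nabla_g u_0\|_2^2 + o(1)$, so the effective threshold is strictly below $1$ near $x_0$, giving $L^p$-boundedness of the exponential and contradicting $c_\epsilon \to \infty$ — this is the form used in \cite{DJLW, Yang2006IJM}.)

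The main obstacle is the local Trudinger-Moser estimate \emph{on a half-ball with Neumann boundary condition}. One must be careful that the right constant is $2\pi$ (the boundary version), not $4\pi$; this is precisely why blow-up on $\partial\Sigma$ changes the sharp constant in Theorem \ref{T1}. Making the cutoff argument clean — ensuring that truncating $\tilde u_\epsilon$ near the flat boundary piece $\partial\mathbb{R}^2_+\cap\mathbb{B}_r$ does not destroy the Neumann condition, and controlling the conformal factor $e^{2f}$ (harmless since $f$ is smooth with $f(0)=0$) — requires the isothermal coordinate structure from \cite{Yang-Zhou} in an essential way. A secondary technical point is verifying that $\alpha_\epsilon$ and $\|u_\epsilon\|_2^2$ stay in a range where the subcritical local estimate applies uniformly; this uses $0 \le \alpha < \lambda_1(\Sigma)$ together with $\|u_\epsilon\|_2^2 \le 1/\lambda_1(\Sigma)$ (from the definition (\ref{la}) of $\lambda_1(\Sigma)$ and $\|\nabla_g u_\epsilon\|_2 \le 1$), so $\alpha\|u_\epsilon\|_2^2 < 1$ and $\alpha_\epsilon$ is bounded away from the catastrophic regime.
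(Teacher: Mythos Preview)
Your plan has the two pieces in the wrong order, and this creates a genuine gap. In your concentration step you want the local half-ball Trudinger--Moser inequality to give an $L^p$ bound on $e^{\alpha_\epsilon \tilde u_\epsilon^2}$ once the local Dirichlet energy is strictly below $1$. But the half-ball (Neumann) critical exponent is $2\pi$, so what you actually need is $\alpha_\epsilon\cdot(\text{local energy})<2\pi$. The a~priori bound you invoke, $\alpha\|u_\epsilon\|_2^2\le \alpha/\lambda_1(\Sigma)<1$, only yields $\alpha_\epsilon<4\pi$; combined with local energy merely $<1$, the product can be anything up to $4\pi$, and the subcritical estimate does \emph{not} follow. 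So the sentence ``$\alpha_\epsilon$ is bounded away from the catastrophic regime'' is wrong: for a boundary blow-up point the catastrophic threshold is $2\pi$, not $4\pi$. The same obstruction hits your preliminary step of bounding $u_\epsilon$ in $L^\infty$ away from $x_0$ at other boundary points.

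The paper avoids this by proving $u_0\equiv 0$ \emph{first}, via exactly the Lions-type splitting you list as an ``alternative'': if $u_0\not\equiv 0$, write $u_\epsilon^2\le(1+\delta)(u_\epsilon-u_0)^2+(1+\delta^{-1})u_0^2$, use $\|\nabla_g(u_\epsilon-u_0)\|_2^2\to 1-\|\nabla_g u_0\|_2^2$ and $\alpha\|u_0\|_2^2<\|\nabla_g u_0\|_2^2$ (from $\alpha<\lambda_1(\Sigma)$) to get $(1+\alpha\|u_\epsilon\|_2^2)\|\nabla_g(u_\epsilon-u_0)\|_2^2\to$ something $<1$, hence a \emph{global} $L^q$ bound on $e^{\alpha_\epsilon u_\epsilon^2}$ via (\ref{2006Y}); elliptic regularity then bounds $u_\epsilon$, contradicting $c_\epsilon\to\infty$. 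With $u_0=0$ in hand one has $\alpha_\epsilon\to 2\pi$, and the local cutoff argument for concentration (the paper uses a cutoff $\rho$ rather than your harmonic splitting, which is a bit more direct but equivalent in spirit) goes through with local energy $<1$. The fix to your write-up is therefore simple: promote the ``alternative'' argument for $u_0=0$ to the first step; your deduction of $u_0=0$ \emph{from} concentration is correct but then becomes redundant.
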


\begin{proof}
We first prove $u_0 \equiv 0$. Suppose not, we can see that $0< \lVert \nabla_g u_0 \rVert _{2}^{2}\leq1$. Letting $\eta=\lVert \nabla_g u_0 \rVert _{2}^{2}$,
one has $\lVert \nabla_g \left( u_{\epsilon}-u_0 \right) \rVert _{2}^{2}\rightarrow 1-\eta<1$ and $1 + \alpha \left\| u _ { \epsilon } \right\| _ 2 ^ { 2 } \rightarrow 1 + \alpha \left\| u _ { 0 } \right\| _ 2 ^ { 2 } \leq 1+\eta$ as $\epsilon\rightarrow0$.
For sufficiently small $\epsilon$, we obtain
\begin{eqnarray*}
\left(1 + \alpha \left\| u _ { \epsilon } \right\| _ 2 ^ { 2 } \right)\lVert \nabla_g \left( u_{\epsilon}-u_0 \right) \rVert _{2}^{2}\leq  \frac{2-\eta^2}{2}<1.
\end{eqnarray*}
From the H\"older inequality, there holds
\begin{eqnarray*}
\int_{\Sigma}{e^{q\alpha _{\epsilon}u_{\epsilon}^{2}}}\,dv_g&\leq& \int_{\Sigma}{e^{q\left( 1+\frac{1}{\delta} \right)\alpha _{\epsilon} u_{0}^{2}+q\left( 1+\delta \right)\alpha _{\epsilon} \left( u_{\epsilon}-u_0 \right) ^2}}\,dv_g\\
	&\leq& \left( \int_{\Sigma}{e^{rq\left( 1+\frac{1}{\delta} \right)\alpha _{\epsilon} u_{0}^{2}}}\,dv_g \right) ^{\frac{1}{r}}\left( \int_{\Sigma}{e^{sq \left( 1+\delta \right)\left( 2\pi -\epsilon \right) \left( 1+\alpha \lVert u_{\epsilon}\lVert _2^{2} \right) \lVert \nabla_g \left( u_{\epsilon}-u_0 \right) \rVert _{2}^{2}\frac{\left( u_{\epsilon}-u_0 \right) ^2}{\lVert \nabla_g \left( u_{\epsilon}-u_0 \right) \rVert _{2}^{2}}}}\,dv_g \right) ^{\frac{1}{s}}\\
	&\leq& C\left( \int_{\Sigma}{e^{sq \left( 1+\delta \right) \left( 2\pi -\epsilon \right)\frac{2-\eta^2}{2}  \frac{\left( u_{\epsilon}-u_0 \right) ^2}{\lVert \nabla_g \left( u_{\epsilon}-u_0 \right) \rVert _{2}^{2}}}}\,dv_g \right) ^{\frac{1}{s}}
\end{eqnarray*}
for  sufficiently small $\delta$, some $r,\ s,\ q>1$ satisfying ${1}/{r}+{1}/{s}=1$ and $sq\left( 1+\delta \right) \left( 2-\eta^2 \right)/2 <1 $. In view of the Trudinger-Moser inequality (\ref{2006Y}), we get $e^{\alpha _{\epsilon}u_{\epsilon}^{2}}$ is bounded in $L^q\left( \Sigma, g \right) $. Hence $\Delta_g u_{\epsilon}$ is bounded in some $L^q\left( \Sigma,g \right)$ from (\ref{e-u}) and (\ref{4}). Applying the elliptic estimate to (\ref{e-u}), one gets $u_{\epsilon}$ is uniformly bounded, which contradicts our assumption $c_{\epsilon}\rightarrow +\infty$. That is to say $u_0 \equiv 0$.

Next we prove $|\nabla_g u_{\epsilon}|^2\,dv_g\rightharpoonup \delta_{x_0}$ in sense of measure. Suppose not. There exists some $r>0$ such that \begin{eqnarray*}\lim_{\epsilon \rightarrow 0}\int_{B_{r}(x_0)}{\left| \nabla_g u_{\epsilon} \right|}^2\,dv_g:=\eta <1,\end{eqnarray*}
where ${B_{r}(x_0)}$ is a geodesic ball centered at $x_0$ with radius $r$.
For sufficiently small $\epsilon$, we can see that $\int_{B_{r}(x_0)}{| \nabla _gu_{\epsilon} |}^2\,dv_g\leq (\eta +1)/{2}<1.$
Then we choose a cut-off function $\rho$ in $C_{0}^{1}\left(\phi\left(B_{r_0}(x_0)\right) \right)$, which is equal to $1$ in $\overline{\phi\left(B_{r_0/2}(x_0)\right) }$  such that
\begin{eqnarray*}\int_{\phi \left( B_{r}\left( x_0 \right)\right) }|\nabla_g \left( \rho \tilde{u}_\epsilon \right) |^2dx
\leq \frac{\eta +3}{4}<1.\end{eqnarray*}
Hence we obtain
\begin{eqnarray*}
\int_{ B_{r/2}\left( x_0 \right)  }{e^{\alpha _{\epsilon}qu_{\epsilon}^{2}}}\,dv_g&=&\int_{\phi \left( B_{r/2}\left( x_0 \right) \right)}{e^{\alpha _{\epsilon}q\tilde{u}_\epsilon^{2}}}e^{2f}dx
\\
&\leq& C\int_{\phi \left( B_{r}\left( x_0 \right) \right) }{e^{\alpha _{\epsilon}q\left( \rho \tilde{u}_\epsilon \right) ^2}}dx
\\
&\leq& C\int_{\phi \left( B_{r}\left( x_0 \right)\right) }{e^{\alpha _{\epsilon}q  \frac{\eta +3}{4}  \frac{\left( \rho \tilde{u}_\epsilon \right) ^2}{\int_{\phi \left( B_{r}\left( x_0 \right)\right) }|\nabla_g \left( \rho \tilde{u}_\epsilon \right) |^2dx}}}dx.
\end{eqnarray*}
From the Trudinger-Moser inequality (\ref{2006Y}),  we get $e^{\alpha _{\epsilon}u_{\epsilon}^{2}}$ is bounded in $L^q\left( B_{r/2}(x_0), g\right) $ for any $q>1$ satisfying $q(\eta +3)/4\leq1$. Applying the elliptic estimate to (\ref{e-u}), one gets $u_{\epsilon}$ is uniformly bounded in $B_{r/4}\left( x_0 \right) $. This contradicts (\ref{c}) and ends the proof of the lemma.
	\end{proof}

\begin{lemma}\label{L4}
Let
  \begin{eqnarray}\label{r}
  r _ { \epsilon }  = \sqrt{\frac { \lambda _ { \epsilon } } { \beta _ { \epsilon } c _ { \epsilon } ^ { 2 }  e ^ { \alpha _ { \epsilon } c _ { \epsilon } ^ { 2 } }}}.
  \end{eqnarray}
Then there hold $\lim_{\epsilon\rightarrow0}r_\epsilon= 0$ and $\lim_{\epsilon\rightarrow0}r_\epsilon^2c_\epsilon^k= 0$, where $k$ is a positive integer.
\end{lemma}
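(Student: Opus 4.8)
The plan is to reduce both limits to a single exponential estimate on the numerator $\lambda_\epsilon$ appearing in (\ref{r}). First I would record the elementary facts about the data. Since $u_\epsilon\in\mathcal{S}$ with $\|\nabla_g u_\epsilon\|_2^2=1$, the variational characterization (\ref{la}) gives the Poincar\'e bound $\|u_\epsilon\|_2^2\le 1/\lambda_1(\Sigma)$, so that for small $\epsilon$
\begin{equation*}
\pi\le 2\pi-\epsilon\le\alpha_\epsilon=(2\pi-\epsilon)\bigl(1+\alpha\|u_\epsilon\|_2^2\bigr)\le(2\pi-\epsilon)\Bigl(1+\frac{\alpha}{\lambda_1(\Sigma)}\Bigr)<4\pi,
\end{equation*}
using $0\le\alpha<\lambda_1(\Sigma)$ (and in fact $\alpha_\epsilon\to 2\pi$ once one invokes $\|u_\epsilon\|_2\to0$ from Lemma \ref{L3}). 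Likewise $\beta_\epsilon=(1+\alpha\|u_\epsilon\|_2^2)/(1+2\alpha\|u_\epsilon\|_2^2)$ stays bounded away from $0$ and $\infty$, with $\beta_\epsilon\to1$, and $c_\epsilon\to+\infty$ by (\ref{c}).

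The decisive step is the bound $\lambda_\epsilon\le C\,c_\epsilon^{2}e^{\alpha_\epsilon c_\epsilon^{2}/2}$. I would split the exponent in half, $e^{\alpha_\epsilon u_\epsilon^2}=e^{\alpha_\epsilon u_\epsilon^2/2}\cdot e^{\alpha_\epsilon u_\epsilon^2/2}$, absorb the polynomial weight into one factor via $u_\epsilon^{2}e^{\alpha_\epsilon u_\epsilon^2/2}\le c_\epsilon^{2}e^{\alpha_\epsilon c_\epsilon^2/2}$, and keep $\int_\Sigma e^{\alpha_\epsilon u_\epsilon^2/2}\,dv_g$. Since $\alpha_\epsilon/2<2\pi$ by the above and $u_\epsilon\in\mathcal{S}$, the Trudinger--Moser inequality (\ref{2006Y}) bounds this last integral by a constant independent of $\epsilon$, which gives the claim. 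Substituting into (\ref{r}),
\begin{equation*}
r_\epsilon^{2}=\frac{\lambda_\epsilon}{\beta_\epsilon c_\epsilon^{2}e^{\alpha_\epsilon c_\epsilon^{2}}}\le\frac{C}{\beta_\epsilon\,e^{\alpha_\epsilon c_\epsilon^{2}/2}}\longrightarrow0,\qquad r_\epsilon^{2}c_\epsilon^{k}\le\frac{C\,c_\epsilon^{k}}{\beta_\epsilon\,e^{\alpha_\epsilon c_\epsilon^{2}/2}}\longrightarrow0
\end{equation*}
as $\epsilon\to0$: in the first, $\beta_\epsilon$ is bounded below and $\alpha_\epsilon c_\epsilon^2\to+\infty$; in the second, $\alpha_\epsilon$ is bounded below by a positive constant and $c_\epsilon\to+\infty$, so the exponential dominates the polynomial $c_\epsilon^{k}$.

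The only genuine obstacle is the estimate for $\lambda_\epsilon$: the crude bound $\lambda_\epsilon\le|\Sigma|\,c_\epsilon^{2}e^{\alpha_\epsilon c_\epsilon^{2}}$ is too weak here, and the improvement hinges on keeping the leftover exponent $\alpha_\epsilon/2$ strictly below the Trudinger--Moser threshold $2\pi$ — which is precisely where the hypothesis $\alpha<\lambda_1(\Sigma)$ enters, through Poincar\'e forcing $\alpha_\epsilon<4\pi$. Note that, unlike the arguments to follow, this lemma requires neither an upper bound on $\lambda_\epsilon$ itself nor the positivity $\liminf_{\epsilon\to0}\lambda_\epsilon>0$ from (\ref{L2}).
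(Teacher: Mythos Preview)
Your argument is correct and is in fact cleaner than the paper's. The paper proceeds by writing
\[
r_\epsilon^2 c_\epsilon^k=\frac{\lambda_\epsilon}{\beta_\epsilon c_\epsilon^{2-k}e^{\alpha_\epsilon c_\epsilon^2}}
\le \frac{1}{\beta_\epsilon e^{(1-\delta)\alpha_\epsilon c_\epsilon^2}}\int_\Sigma u_\epsilon^{k}\,e^{(1-\delta)\alpha_\epsilon u_\epsilon^2}\,dv_g,
\]
then applies H\"older to separate $u_\epsilon^k$ from $e^{(1-\delta)s\alpha_\epsilon u_\epsilon^2}$ and concludes via $\|u_\epsilon\|_{kr}\to0$, which is Lemma~\ref{L3}. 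Your route avoids H\"older entirely: the pointwise bound $u_\epsilon^{2}e^{\alpha_\epsilon u_\epsilon^{2}/2}\le c_\epsilon^{2}e^{\alpha_\epsilon c_\epsilon^{2}/2}$ together with the Poincar\'e estimate $\alpha_\epsilon/2\le \pi(1+\alpha/\lambda_1(\Sigma))<2\pi$ lets (\ref{2006Y}) bound $\int_\Sigma e^{\alpha_\epsilon u_\epsilon^{2}/2}\,dv_g$ uniformly, and the remaining factor $e^{-\alpha_\epsilon c_\epsilon^{2}/2}$ kills any polynomial $c_\epsilon^k$. The gain is that your proof is logically independent of Lemma~\ref{L3} (you only mention $\|u_\epsilon\|_2\to0$ parenthetically, and never use it), so Lemma~\ref{L4} could be placed before Lemma~\ref{L3} in the exposition. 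The paper's version, by contrast, genuinely consumes the conclusion $u_0\equiv0$ of Lemma~\ref{L3} through the factor $\bigl(\int_\Sigma u_\epsilon^{kr}\,dv_g\bigr)^{1/r}\to0$; its advantage is only that the same H\"older template recurs elsewhere in the blow-up analysis.
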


\begin{proof}
It is easy to know that $\lim_{\epsilon\rightarrow0}r_\epsilon= 0$ from (\ref{e-u}).
Using the H\"older inequality and (\ref{e-u}), we have
	\begin{eqnarray*}
r_{\epsilon}^2 c_{\epsilon}^k&=&\frac{\lambda _{\epsilon}}{\beta _{\epsilon}c_{\epsilon}^{2-k}e^{\alpha _{\epsilon}c_{\epsilon}^{2}}}
\leq \frac{\int_{\Sigma}{u_{\epsilon}^{k}e^{\left( 1-\delta \right) \alpha _{\epsilon}u_{\epsilon}^{2}}\,dv_g}}{\beta _{\epsilon}e^{\left( 1-\delta \right) \alpha _{\epsilon}c_{\epsilon}^{2}}}
\\
 &\leq& \frac{1}{\beta _{\epsilon}e^{\left( 1-\delta \right) \alpha _{\epsilon}c_{\epsilon}^{2}}}\left( \int_{\Sigma}{u_{\epsilon}^{kr}\,dv_g} \right) ^{\frac{1}{r}}\left( \int_{\Sigma}{e^{\left( 1-\delta \right) s\alpha _{\epsilon}u_{\epsilon}^{2}}\,dv_g} \right) ^{\frac{1}{s}}\\
 &\leq &\frac{1}{\beta _{\epsilon}}\left( \int_{\Sigma}{u_{\epsilon}^{kr}\,dv_g} \right) ^{\frac{1}{r}}\left({\rm Area}(\Sigma ) \right) ^{\frac{1}{s}}
	\end{eqnarray*}
for any $0<\delta <1$ and $1/r+1/s=1$.
Then the lemma follows from Lemma \ref{L3} immediately.
\end{proof}

Define
\begin{eqnarray*}
\tilde{u}_{\epsilon}(x)=\left\{\begin{aligned}
&u_{\epsilon} \circ \phi^{-1}\left(x_{1}, x_{2}\right), &  x_{2} \geq 0, \\
&u_{\epsilon} \circ \phi^{-1}\left(x_{1},-x_{2}\right), &x_{2}<0,
\end{aligned}\right.
\end{eqnarray*}
and
\begin{eqnarray*}
\tilde{f}(x)=\left\{\begin{aligned}
&f \left(x_{1}, x_{2}\right), &  x_{2} \geq 0, \\
&f \left(x_{1},-x_{2}\right), &x_{2}<0,
\end{aligned}\right.
\end{eqnarray*}
on $\mathbb{B}_r$.
Let $U_\epsilon=\left\{x\in\mathbb{R}^2:\ \tilde{x}_\epsilon +r_{\epsilon}x\in\mathbb { B }_r\right\}$. Then one has $U_ { \epsilon }\rightarrow \mathbb{R}^2\,\mathrm{as}\, \epsilon \rightarrow 0$ from Lemma \ref{L4}.
Define two blowing up functions on $ U _ { \epsilon }$,
\begin{eqnarray}\label{psi}
\psi _ { \epsilon } ( x ) &=& \frac{\tilde{u}\left(\tilde{x}_\epsilon+ r _ { \epsilon } x \right)} {c _ { \epsilon }},\\
\label{phi}
\varphi _{\epsilon}\left( x \right) &=&c_{\epsilon}\left( \tilde{u}\left(\tilde{x}_\epsilon+r_{\epsilon}x \right) -c_{\epsilon} \right).
\end{eqnarray}
Now we study the convergence behavior of $\psi _{\epsilon}$ and\ $\varphi _{\epsilon}.$

\begin{lemma}
Up to a subsequence, there hold
\begin{eqnarray}
\label{1}  \lim_{ \epsilon\rightarrow0} \psi _{\epsilon}(x) &=& 1  \ \ \ \mathrm{in}\ \ \ C_{loc}^{1}( \mathbb{R}^2 ),\\
\label{6}
  \lim_{ \epsilon\rightarrow0}\varphi _{\epsilon}(x)&=&\varphi(x) \ \ \ \mathrm{in}\  \ \  C_{loc}^{1}( \mathbb{R}^2 ),
\end{eqnarray}
where \begin{eqnarray}\label{7}
\varphi \left( x \right) =-\frac{1}{2\pi}\log \left( 1+\frac{\pi}{2} \left| x \right|^2 \right)
\end{eqnarray}
and
\begin{eqnarray}\label{8}
\int_{\mathbb{R}^2_+}e^{4\pi \varphi(x)}dx=1.
\end{eqnarray}

\end{lemma}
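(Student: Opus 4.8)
The plan is to run the standard two–profile blow-up argument on the rescaled functions (\ref{psi})--(\ref{phi}). First I would extend $u_\epsilon$ and $f$ evenly across $\{x_2=0\}$ through the isothermal chart near $x_0$; because of the Neumann condition in (\ref{e-u}), the reflected function $\tilde u_\epsilon$ is a $W^{2,p}_{\mathrm{loc}}$–solution on $\mathbb{B}_r$ of
\[
-\Delta_{\mathbb{R}^2}\tilde u_\epsilon=e^{2\tilde f}\Bigl(\tfrac{\beta_\epsilon}{\lambda_\epsilon}\,\tilde u_\epsilon e^{\alpha_\epsilon\tilde u_\epsilon^{2}}+\gamma_\epsilon\tilde u_\epsilon-\tfrac{\mu_\epsilon}{\lambda_\epsilon}\Bigr).
\]
Substituting $y=\tilde x_\epsilon+r_\epsilon x$, using the definition (\ref{r}) of $r_\epsilon$ and the identity $\alpha_\epsilon\tilde u_\epsilon(y)^2=\alpha_\epsilon c_\epsilon^{2}+\alpha_\epsilon\varphi_\epsilon(x)\bigl(2+\varphi_\epsilon(x)/c_\epsilon^{2}\bigr)$, this becomes an equation for $\psi_\epsilon$ whose right-hand side tends to $0$ in $L^\infty_{\mathrm{loc}}(\mathbb{R}^2)$ (the nonlinear term is $O(c_\epsilon^{-2})$ because $|\tilde u_\epsilon|\le c_\epsilon$ makes the exponential factor $\le1$, and the lower-order terms are $O(r_\epsilon^2)$ by (\ref{4}) and $\gamma_\epsilon\le\alpha$), and the equation
\[
-\Delta_{\mathbb{R}^2}\varphi_\epsilon=e^{2\tilde f(\tilde x_\epsilon+r_\epsilon x)}\,\psi_\epsilon\,e^{\alpha_\epsilon\varphi_\epsilon(2+\varphi_\epsilon/c_\epsilon^{2})}+c_\epsilon^{2}r_\epsilon^{2}e^{2\tilde f}\Bigl(\gamma_\epsilon\psi_\epsilon-\tfrac{\mu_\epsilon}{c_\epsilon\lambda_\epsilon}\Bigr),
\]
where the last bracket is bounded by (\ref{4}) and the coefficients $c_\epsilon^{2}r_\epsilon^{2}$, $c_\epsilon r_\epsilon^{2}$ tend to $0$ by Lemma \ref{L4}; note that $\varphi_\epsilon\le0$, $\varphi_\epsilon(0)=0$, and $2+\varphi_\epsilon/c_\epsilon^{2}=1+\psi_\epsilon\ge0$, so the exponential factor above is $\le1$.

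For (\ref{1}): since $|\psi_\epsilon|\le1$ and $\Delta_{\mathbb{R}^2}\psi_\epsilon\to0$ in $L^\infty(B_R)$ for every $R$, interior elliptic estimates give uniform $C^{1,\gamma}(B_R)$ bounds, so along a subsequence $\psi_\epsilon\to\psi$ in $C^{1}_{\mathrm{loc}}(\mathbb{R}^2)$ with $\psi$ harmonic, $|\psi|\le1$, and $\psi(0)=1$ since $\psi_\epsilon(0)=1$; Liouville's theorem forces $\psi\equiv1$, and uniqueness of the limit upgrades this to convergence of the full sequence.

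For (\ref{6})--(\ref{7}): the heart of the matter is a uniform local bound for $\varphi_\epsilon$. Using $\varphi_\epsilon\le0$, $\varphi_\epsilon(0)=0$, $\psi_\epsilon\to1$ and the factor $\le1$ above, the right-hand side of the $\varphi_\epsilon$–equation is bounded in $L^\infty(B_{2R})$ uniformly in $\epsilon$, hence $|\Delta_{\mathbb{R}^2}\varphi_\epsilon|\le C(R)$ on $B_{2R}$. I would then solve $\Delta\bar\varphi_\epsilon=\Delta\varphi_\epsilon$ in $B_R$ with zero boundary data, so that $\bar\varphi_\epsilon$ is bounded in $C^{1,\gamma}(B_R)$; then $\bar\varphi_\epsilon-\varphi_\epsilon$ is a nonnegative harmonic function on $B_R$, and the Harnack inequality gives $\sup_{B_{R/2}}(\bar\varphi_\epsilon-\varphi_\epsilon)\le C\bigl(\bar\varphi_\epsilon(0)-\varphi_\epsilon(0)\bigr)\le C$, whence $|\varphi_\epsilon|\le C(R)$ on $B_{R/2}$. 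With this bound, $\varphi_\epsilon^{2}/c_\epsilon^{2}\to0$ locally uniformly and $\alpha_\epsilon\to2\pi$ (because $u_0\equiv0$ by Lemma \ref{L3}, so $\|u_\epsilon\|_2^2\to0$), so the right-hand side converges in $C^{0}_{\mathrm{loc}}$; elliptic estimates and a bootstrap give $\varphi_\epsilon\to\varphi$ in $C^{1}_{\mathrm{loc}}(\mathbb{R}^2)$ along a subsequence, with $\varphi\in C^{2}(\mathbb{R}^2)$ solving $-\Delta_{\mathbb{R}^2}\varphi=e^{4\pi\varphi}$ and $\varphi(0)=0=\sup_{\mathbb{R}^2}\varphi$.

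Finally, for the identification and (\ref{8}): from $\lambda_\epsilon=\int_\Sigma u_\epsilon^{2}e^{\alpha_\epsilon u_\epsilon^{2}}dv_g$, the change of variables $y=\tilde x_\epsilon+r_\epsilon x$ together with the even reflection give, for each fixed $R$,
\[
\int_{B_R}\psi_\epsilon^{2}\,e^{\alpha_\epsilon\varphi_\epsilon(2+\varphi_\epsilon/c_\epsilon^{2})}\,e^{2\tilde f(\tilde x_\epsilon+r_\epsilon x)}\,dx=\frac{\beta_\epsilon}{\lambda_\epsilon}\int_{B_{Rr_\epsilon}(\tilde x_\epsilon)}\tilde u_\epsilon^{2}e^{\alpha_\epsilon\tilde u_\epsilon^{2}}e^{2\tilde f}\,dx\le 2\beta_\epsilon\le2,
\]
and letting $\epsilon\to0$ (dominated convergence, the integrand being $\le\psi_\epsilon^2 e^{2\tilde f}\le C$) yields $\int_{B_R}e^{4\pi\varphi}\,dx\le2$, hence $\int_{\mathbb{R}^2}e^{4\pi\varphi}<+\infty$. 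The classification of finite-mass entire solutions of $-\Delta v=e^{4\pi v}$ on $\mathbb{R}^2$ (see e.g. \cite{DJLW, Li-JPDE}), combined with $\varphi(0)=\sup\varphi=0$, forces $\varphi$ to be exactly (\ref{7}); integrating $e^{4\pi\varphi}=(1+\tfrac{\pi}{2}|x|^2)^{-2}$ over $\mathbb{R}^2$ gives total mass $2$, so $\int_{\mathbb{R}^2_+}e^{4\pi\varphi}\,dx=1$ by radial symmetry, which is (\ref{8}). The main obstacle is the local boundedness of $\varphi_\epsilon$: one must prevent $\varphi_\epsilon\to-\infty$ on compact sets, which relies essentially on $|u_\epsilon|$ attaining its maximum at $x_\epsilon$ (to obtain the a priori bound on $\Delta_{\mathbb{R}^2}\varphi_\epsilon$) and on the Harnack inequality; a secondary point is that the finite-mass property required for the Liouville classification is not automatic and must be extracted from the normalization $\int_\Sigma u_\epsilon^{2}e^{\alpha_\epsilon u_\epsilon^{2}}\,dv_g=\lambda_\epsilon$ and the reflection across $\partial\Sigma$.
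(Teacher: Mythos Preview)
Your argument is correct and follows essentially the same route as the paper: the equations for $\psi_\epsilon$ and $\varphi_\epsilon$ are exactly (\ref{e-psi})--(\ref{e-phi}) (your exponent $\alpha_\epsilon\varphi_\epsilon(2+\varphi_\epsilon/c_\epsilon^{2})$ is just $\alpha_\epsilon(1+\psi_\epsilon)\varphi_\epsilon$), the Liouville step for $\psi_\epsilon$ is identical, and the classification of the bubble and the mass computation match. The two places where you differ are both improvements in exposition rather than in strategy: you spell out the Harnack argument (Dirichlet solution $\bar\varphi_\epsilon$ plus Harnack for the nonnegative harmonic $\bar\varphi_\epsilon-\varphi_\epsilon$) that the paper hides behind ``applying the elliptic estimate to (\ref{e-phi})'', and you obtain (\ref{8}) directly from the radial symmetry of the explicit profile (\ref{7}) and $\int_{\mathbb{R}^2}e^{4\pi\varphi}=2$, whereas the paper reaches the same conclusion by splitting $\mathbb{B}_R$ into its upper and lower halves and squeezing each half-integral against $\lambda_\epsilon$; the paper cites Chen--Li \cite{CL} for the classification, while your references to \cite{DJLW,Li-JPDE} serve the same purpose.
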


\begin{proof}
By (\ref{e-u}) and (\ref{r})-(\ref{phi}), a direct computation shows
\begin{eqnarray}\label{e-psi}
   -\Delta _{\mathbb{R}^2}\psi _{\epsilon}&=&\left( c_{\epsilon}^{-2}\psi _{\epsilon}e^{\alpha _{\epsilon}\left( \psi _{\epsilon}+1 \right) \varphi _{\epsilon}}+r_{\epsilon}^{2}\gamma _{\epsilon}\psi _{\epsilon}-\frac{r_{\epsilon}^{2}\mu _\epsilon}{c_{\epsilon}\lambda _{\epsilon}} \right) e^{2\tilde{f}\left(\tilde{x}_\epsilon+r_{\epsilon}x \right)},\\
   \label{e-phi}
-\Delta_{\mathbb{R}^2}\varphi _{\epsilon}&=&\left( \psi _{\epsilon}e^{\alpha _{\epsilon}\left( \psi _{\epsilon}+1 \right) \varphi _{\epsilon}}+c_{\epsilon}^2r_{\epsilon}^{2}\gamma _{\epsilon}\psi _{\epsilon}-\frac{c_{\epsilon}r_{\epsilon}^{2}\mu _{\epsilon}}{\lambda _{\epsilon}} \right) e^{2\tilde{f}\left( \tilde{x}_\epsilon+r_{\epsilon}x \right)}.
	\end{eqnarray}
Since $\left| \psi _{\epsilon} \right|\leq 1$ and $ \lim_{{\epsilon}\rightarrow 0}-\Delta_{\mathbb{R}^2} \psi _{\epsilon}=0$,
we have by the elliptic estimate to (\ref{e-psi}) that $\lim_{{\epsilon}\rightarrow 0}\psi _{\epsilon}=\psi$ in $ C_{loc}^{1}( \mathbb{R}^2 )$, where $\psi$ is a bounded harmonic function in $\mathbb{R}^2$. Note that $\psi \left( 0 \right) =\lim_{\epsilon \rightarrow 0}\,\psi _{\epsilon}\left( 0 \right) =1$. It follows from the Liouville theorem that $\psi \equiv 1$ in $\mathbb{R}^2$. That is to say (\ref{1}) holds.

Note that $\varphi _{\epsilon}\left( x \right) \leq \varphi _{\epsilon}\left( 0 \right) =0$ for any $ x\in U_\epsilon$. Applying Lemma \ref{L4} and the elliptic estimate to (\ref{e-phi}), we obtain (\ref{6}),
where $\varphi$ satisfies
\begin{eqnarray*}
\left\{ \begin{aligned}
&-\Delta_{\mathbb{R}^2} \varphi =e^{4\pi  \varphi}\,\,\text{in\,\,}\mathbb{R}^2,\\
&\varphi \left( 0 \right) =0=\sup_{\mathbb{R}^2}\,\varphi,\\
&\int_{\mathbb{R}^2}{e^{4\pi  \varphi}dx\leq 2}.
	\end{aligned} \right.
\end{eqnarray*}
By the uniqueness theorem in Chen-Li \cite{CL}, we have (\ref{7}).
Moreover, a simple calculation gives
\begin{eqnarray}\label{2}
\int_{\mathbb{R}^2}{e^{4\pi  \varphi}dx= 2}.
\end{eqnarray}
Denote $U_{\epsilon}^{+}=\left\{x \in \mathbb{R}^{2}: \tilde{x}_\epsilon +r_{\epsilon}x\in\mathbb { B }_r^{+}\right\}$ and $U_{\epsilon}^{-}=\left\{x \in \mathbb{R}^{2}:\tilde{x}_\epsilon +r_{\epsilon}x\in\mathbb { B }_r^{-}\right\} .$ For any fixed $R>0$,
let $\mathbb{B}_{R}^{\prime}=\left\{x\in \mathbb{B}_{R}: \tilde{x}_\epsilon +r_{\epsilon}x\in\mathbb { B }_r^{+}\right\}$ and $\mathbb{B}_{R}^{\prime \prime}=\left\{x \in \mathbb{B}_{R}:\tilde{x}_\epsilon +r_{\epsilon}x\in\mathbb { B }_r^{-}\right\}$, we have
\begin{eqnarray*}
\int_{\mathbb{B}_{R}} e^{4 \pi \varphi} dx&=&\lim _{\epsilon \rightarrow 0} \int_{\mathbb{B}_{R}} \frac{1}{\beta_{\epsilon}} \psi_{\epsilon}^{2} e^{\alpha_{\epsilon}\left(1+\psi_{\epsilon}\right) \varphi_{\epsilon}} dx \\
&=&\lim _{\epsilon \rightarrow 0} \int_{\mathbb{B}_{R r_{\epsilon}}\left(\tilde{x}_\epsilon\right)} \frac{1}{\lambda_{\epsilon}} \tilde{u}_{\epsilon}^{2} e^{\alpha_{\epsilon} \tilde{u}_{\epsilon}^{2}} dx\\
& \leq& \lim _{\epsilon \rightarrow 0} \int_{\mathbb{B}_{R r_{\epsilon}}^{+}\left(\tilde{x}_\epsilon\right)} \frac{1}{\lambda_{\epsilon}} \tilde{u}_{\epsilon}^{2} e^{\alpha_{\epsilon} \tilde{u}_{\epsilon}^{2}} dx+\lim _{\epsilon \rightarrow 0} \int_{\mathbb{B}_{R r_{\epsilon}}^{-}\left(\tilde{x}_\epsilon\right)} \frac{1}{\lambda_{\epsilon}} \tilde{u}_{\epsilon}^{2} e^{\alpha_{\epsilon} \tilde{u}_{\epsilon}^{2}} dx.
\end{eqnarray*}
This inequality together with $\int_{U_\epsilon} \tilde{u}_{\epsilon}^{2} e^{\alpha_{\epsilon} \tilde{u}_{\epsilon}^{2}}dx \leq \lambda_{\epsilon}$ and (\ref{2}) gives
\begin{eqnarray*}
\lim_{R \rightarrow+\infty} \lim_{\epsilon \rightarrow 0} \int_{\mathbb{B}_{R r_{\epsilon}}^{+}\left(\tilde{x}_\epsilon\right)} \frac{1}{\lambda_{\epsilon}} \tilde{u}_{\epsilon}^{2} e^{\alpha_{\epsilon} \tilde{u}_{\epsilon}^{2}} dx&=&1, \\
\lim_{R \rightarrow+\infty} \lim_{\epsilon \rightarrow 0} \int_{\mathbb{B}_{R r_{\epsilon}}^{-}\left(\tilde{x}_\epsilon\right)} \frac{1}{\lambda_{\epsilon}} \tilde{u}_{\epsilon}^{2} e^{\alpha_{\epsilon} \tilde{u}_{\epsilon}^{2}} dx&=&1.
\end{eqnarray*}
That is to say (\ref{8}) holds. Then we have the lemma.
\end{proof}

Next we discuss the convergence behavior of $u_\epsilon$ away from $x_0$.
Denote $u_{\epsilon,\,\beta }=\min  \{\ \beta c_{\epsilon} , u_{\epsilon} \}  \in W^{1,2}\left(\Sigma, g \right) $ for any real number $0<\beta<1$. Following (\cite{Yang2006IJM}, Lemma 3.6), we get
\begin{eqnarray}\label{9}
\lim_{\epsilon \rightarrow 0}\left\| \nabla_g u _ { \epsilon ,\ \beta } \right\| _ { 2 } ^ { 2 }=\beta.
\end{eqnarray}

\begin{lemma}\label{L6}
Letting $\lambda _{\epsilon}$ be defined by (\ref{e-u}), we obtain
\begin{linenomath}
\begin{flalign*}
\begin{split}
&(i)\,\limsup_{ \epsilon \rightarrow 0 }F_{\alpha}^{2\pi-\epsilon}(u_\epsilon)
= {\rm Area}( \Sigma)+\lim_{\epsilon \rightarrow 0}\,\frac{\lambda _{\epsilon}}{c_{\epsilon}^{2}}, \\
&(ii)\,\lim_{\epsilon \rightarrow 0}\frac{\lambda _{\epsilon}}{c_{\epsilon}^{2}}
=\lim_{R\rightarrow +\infty}  \lim_{\epsilon \rightarrow 0}\int_{\phi ^{-1}\left( \mathbb{B}^+_{Rr_{\epsilon}}( \tilde{x}_{\epsilon} )\ri )}{e^{\alpha_\epsilon u_{\epsilon}^{2}}}\,dv_g.
\end{split}&
\end{flalign*}
\end{linenomath}
\end{lemma}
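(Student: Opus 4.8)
The plan is to prove both identities by splitting $\Sigma$ into $\{|u_\epsilon|\le\beta c_\epsilon\}$ and $\{|u_\epsilon|>\beta c_\epsilon\}$ for a fixed $\beta\in(0,1)$, and letting $\beta\to1$ at the end (passing, as usual, to subsequences, in particular one along which $\lambda_\epsilon/c_\epsilon^2$ converges). The crucial preliminary fact for $(i)$ is that the truncation $u_{\epsilon,\beta}=\min\{\beta c_\epsilon,u_\epsilon\}$ satisfies $\|e^{\alpha_\epsilon u_{\epsilon,\beta}^2}\|_{L^p(\Sigma,g)}\le C$ for some $p>1$. Indeed, Lemma \ref{L3} gives $u_0\equiv0$, so $\|u_\epsilon\|_2\to0$ and $\alpha_\epsilon\to2\pi$, while (\ref{9}) gives $\|\nabla_g u_{\epsilon,\beta}\|_2^2\to\beta<1$; since $u_{\epsilon,\beta}$ need not have mean value zero I would subtract its average $\overline{u_{\epsilon,\beta}}\to0$, apply the Trudinger--Moser inequality (\ref{2006Y}) to $(u_{\epsilon,\beta}-\overline{u_{\epsilon,\beta}})/\|\nabla_g u_{\epsilon,\beta}\|_2$, absorb the cross term, and choose $p>1$ with $p\,\alpha_\epsilon\|\nabla_g u_{\epsilon,\beta}\|_2^2<2\pi$ for $\epsilon$ small. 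Because $u_{\epsilon,\beta}\to0$ a.e.\ and in every $L^q(\Sigma,g)$, this bound yields, via uniform integrability (Vitali's theorem), both $\int_\Sigma e^{\alpha_\epsilon u_{\epsilon,\beta}^2}\,dv_g\to{\rm Area}(\Sigma)$ and $\int_\Sigma u_{\epsilon,\beta}^2 e^{\alpha_\epsilon u_{\epsilon,\beta}^2}\,dv_g\to0$.

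Granting this, $(i)$ follows: on $\{|u_\epsilon|\le\beta c_\epsilon\}$ one has $u_\epsilon=u_{\epsilon,\beta}$, so that region contributes $\int_\Sigma e^{\alpha_\epsilon u_{\epsilon,\beta}^2}\,dv_g-\int_{\{|u_\epsilon|>\beta c_\epsilon\}}e^{\alpha_\epsilon u_{\epsilon,\beta}^2}\,dv_g={\rm Area}(\Sigma)+o(1)$, the subtracted integral being $o(1)$ since its domain has measure $O(\|u_\epsilon\|_2^2/c_\epsilon^2)\to0$ and $e^{\alpha_\epsilon u_{\epsilon,\beta}^2}$ is $L^p$-bounded (Hölder). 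On $\{|u_\epsilon|>\beta c_\epsilon\}$ one has $\beta^2\le u_\epsilon^2/c_\epsilon^2\le1$, so $\int_{\{|u_\epsilon|>\beta c_\epsilon\}}e^{\alpha_\epsilon u_\epsilon^2}\,dv_g$ lies between $c_\epsilon^{-2}\int_{\{|u_\epsilon|>\beta c_\epsilon\}}u_\epsilon^2 e^{\alpha_\epsilon u_\epsilon^2}\,dv_g$ and $\beta^{-2}$ times that same quantity, which in turn equals $c_\epsilon^{-2}(\lambda_\epsilon-\int_{\{|u_\epsilon|\le\beta c_\epsilon\}}u_{\epsilon,\beta}^2 e^{\alpha_\epsilon u_{\epsilon,\beta}^2}\,dv_g)=\lambda_\epsilon/c_\epsilon^2+o(1)$. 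Summing the two regions, taking $\limsup_{\epsilon\to0}$, and then $\beta\to1$ gives $(i)$.

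For $(ii)$ I would localize at the scale $r_\epsilon$ defined in (\ref{r}). The computation already carried out in the proof of (\ref{8}) shows $\lim_{R\to\infty}\lim_{\epsilon\to0}\lambda_\epsilon^{-1}\int_{\phi^{-1}(\mathbb{B}^+_{Rr_\epsilon}(\tilde{x}_\epsilon))}u_\epsilon^2 e^{\alpha_\epsilon u_\epsilon^2}\,dv_g=1$ (the conformal factor $e^{2\tilde{f}}\to1$ on the shrinking ball being harmless). By the $C^1_{loc}$ convergence $\psi_\epsilon\to1$ in (\ref{1}), on $\phi^{-1}(\mathbb{B}^+_{Rr_\epsilon}(\tilde{x}_\epsilon))$ we have $u_\epsilon^2/c_\epsilon^2=\psi_\epsilon^2=1+o(1)$ uniformly, hence $c_\epsilon^{-2}\int_{\phi^{-1}(\mathbb{B}^+_{Rr_\epsilon}(\tilde{x}_\epsilon))}u_\epsilon^2 e^{\alpha_\epsilon u_\epsilon^2}\,dv_g=(1+o(1))\int_{\phi^{-1}(\mathbb{B}^+_{Rr_\epsilon}(\tilde{x}_\epsilon))}e^{\alpha_\epsilon u_\epsilon^2}\,dv_g$. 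Writing $c_\epsilon^{-2}\int_{\phi^{-1}(\mathbb{B}^+_{Rr_\epsilon}(\tilde{x}_\epsilon))}u_\epsilon^2 e^{\alpha_\epsilon u_\epsilon^2}\,dv_g=(\lambda_\epsilon/c_\epsilon^2)\cdot\lambda_\epsilon^{-1}\int_{\phi^{-1}(\mathbb{B}^+_{Rr_\epsilon}(\tilde{x}_\epsilon))}u_\epsilon^2 e^{\alpha_\epsilon u_\epsilon^2}\,dv_g$ and combining with the previous identity, then sending $\epsilon\to0$ and $R\to\infty$, gives $(ii)$.

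The step I expect to be the real obstacle is the $L^p$-bound on $e^{\alpha_\epsilon u_{\epsilon,\beta}^2}$ in $(i)$: applying (\ref{2006Y}) is delicate because the truncations lack the zero-mean-value property, which forces the average-subtraction argument, and the whole scheme depends critically on $\alpha_\epsilon\to2\pi$ together with $\|\nabla_g u_{\epsilon,\beta}\|_2^2\to\beta<1$, the former resting on $u_0\equiv0$ (Lemma \ref{L3}). Once this estimate is secured, part $(ii)$ reduces to bookkeeping on top of the blow-up lemmas already established.
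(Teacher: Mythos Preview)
Your proposal is correct and follows essentially the same approach as the paper: the same truncation $u_{\epsilon,\beta}$ and splitting into $\{u_\epsilon$ small$\}$/$\{u_\epsilon$ large$\}$ for $(i)$, with the $L^p$-bound on $e^{\alpha_\epsilon u_{\epsilon,\beta}^2}$ coming from (\ref{2006Y}) and (\ref{9}), and the same blow-up rescaling for $(ii)$. The only cosmetic differences are that the paper uses the inequality $e^t-1\le te^t$ plus H\"older where you invoke Vitali, gets the lower bound in $(i)$ via $u_\epsilon^2/c_\epsilon^2\le1$ globally rather than from your sandwich on the high set, and for $(ii)$ does the change of variables directly rather than routing through (\ref{8}); your explicit handling of the mean-zero issue when applying (\ref{2006Y}) is a detail the paper glosses over.
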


\begin{proof}
Recalling (\ref{e-u}) and (\ref{9}), for any real number $0<\beta<1$, one gets
\begin{eqnarray*}
F_{\alpha}^{2\pi-\epsilon}(u_\epsilon)-{\rm Area}( \Sigma)
&=&
\int_{\left\{ x\in \Sigma :\, u_{\epsilon}\leq \beta c_{\epsilon} \right\}}{( e^{\alpha _{\epsilon}u_{\epsilon}^{2}}-1 )\, \,dv_g}+\int_{\left\{ x\in \Sigma :\, u_{\epsilon}>\beta c_{\epsilon} \right\}}{( e^{\alpha _{\epsilon}u_{\epsilon}^{2}}-1 )\, \,dv_g}
\\
&\leq& \int_{\Sigma}{( e^{\alpha _{\epsilon}u_{\epsilon ,\,\beta}^{2}}-1 )\, \,dv_g}+\frac{u_{\epsilon}^{2}}{\beta ^2c_{\epsilon}^{2}}\int_{\left\{ x\in \Sigma :\,u_{\epsilon}>\beta c_{\epsilon} \right\}}{e^{\alpha _{\epsilon}u_{\epsilon}^{2}}\,dv_g}
\\
&\leq& \int_{\Sigma}e^{\alpha _{\epsilon}u_{\epsilon ,\,\beta}^{2}}\alpha _{\epsilon}u_{\epsilon}^{2}\,dv_g+\frac{\lambda _{\epsilon}}{\beta ^2c_{\epsilon}^{2}}
\\
&\leq& \left( \int_{\Sigma}e^{r\alpha _{\epsilon}u_{\epsilon ,\,\beta}^{2}}\,dv_g \right) ^{1/r}\left( \int_{\Sigma} \alpha _{\epsilon}^su_{\epsilon}^{2s}\,dv_g \right) ^{1/s}+\frac{\lambda _{\epsilon}}{\beta ^2c_{\epsilon}^{2}}.
\end{eqnarray*}
By (\ref{2006Y}) and (\ref{9}), $e^{\alpha _{\epsilon}u_{\epsilon ,\,\beta}^{2}}$ is bounded in $L^r\left( \Sigma, g \right)$ for some $r>1$.
Then letting $\epsilon \rightarrow 0$ first and then $\beta \rightarrow 1$, we obtain
\begin{eqnarray}\label{s111}
\limsup _ { \epsilon \rightarrow 0 } F_{\alpha}^{2\pi-\epsilon}(u_\epsilon)-{\rm Area}( \Sigma)\leq \limsup_{\epsilon \rightarrow 0}\,\frac{\lambda _{\epsilon}}{c_{\epsilon}^{2}}.
\end{eqnarray}
According to $c_{\epsilon}={\max}_{\overline{\Sigma}}u_{\epsilon}$, (\ref{e-u}) and Lemma \ref{L3}, we have
\begin{eqnarray*}
F_{\alpha}^{2\pi-\epsilon}(u_\epsilon)-{\rm Area}( \Sigma) \geq \frac{{\lambda_{\epsilon}}}{c_{\epsilon}^{2}}-
\frac{1}{c_{\epsilon}^{2}}\int_{\Sigma}{u_{\epsilon}^{2}}\,dv_g,
\end{eqnarray*}
that is to say
\begin{eqnarray}\label{s112}
\limsup _ { \epsilon \rightarrow 0 }F_{\alpha}^{2\pi-\epsilon}(u_\epsilon) -{\rm Area}\left( \Sigma \right) \geq \liminf _ {\epsilon\rightarrow0}\frac{\lambda_{\epsilon}}{c_{\epsilon}^{2}}.
\end{eqnarray}
Combining (\ref{s111}) and (\ref{s112}), one gets ($i$).

Applying (\ref{e-u}) and (\ref{r})-(\ref{phi}), we obtain
\begin{eqnarray*}
\int_{\phi ^{-1}\left( \mathbb{B}^+_{Rr_{\epsilon}}( \tilde{x}_{\epsilon} ) \right)}{e^{\alpha _{\epsilon}u_{\epsilon}^{2}}}\,dv_g
&=&\int_{\mathbb{B}^+_{Rr_{\epsilon}}\left( \tilde{x}_{\epsilon} \right)}{e^{\alpha _{\epsilon}u_{\epsilon}^{2}\left( x \right)}e^{2f\left( x \right)}}dx
\\
&=&\int_{\mathbb{B}^+_R\left( 0 \right)}{r_{\epsilon}^{2}e^{\alpha _{\epsilon}c_{\epsilon}^{2}\left( x \right)}e^{\alpha _{\epsilon}\left( \psi _{\epsilon}\left( x \right) +1 \right) \varphi _{\epsilon}\left( x \right)}e^{2f\left( \tilde{x}_{\epsilon}+r_{\epsilon}x \right)}}dx
\\
&=&\frac{\lambda _{\epsilon}}{c_{\epsilon}^{2}}\int_{\mathbb{B}^+_R\left( 0 \right)}\frac{1}{\beta _{\epsilon}}{e^{\alpha _{\epsilon}\left( \psi _{\epsilon}\left( x \right) +1 \right)\, \varphi _{\epsilon}\left( x \right)}e^{2f\left( \tilde{x}_{\epsilon}+r_{\epsilon}x \right)}}dx.
\end{eqnarray*}
Letting $\epsilon \rightarrow 0$ first and then $R \rightarrow +\infty$, we have ($ii$)
by (\ref{6})-(\ref{8}).
\end{proof}

Next we consider the properties of $c_{\epsilon}u_{\epsilon}$.
Using the similar idea of (\cite{Yang2006IJM}, Lemma 3.9), one gets
\begin{eqnarray}\label{10}
 \frac { \beta _ { \epsilon } } { \lambda _ { \epsilon } } c _ { \epsilon } u _ { \epsilon } e ^ { \alpha _ { \epsilon } u _ { \epsilon } ^ { 2 } } \,dv_g \rightharpoonup \delta_{x_0}.
\end{eqnarray}
After a slight modification of (\cite{Yang2007}, Lemma 4.8), we have
\begin{lemma}\label{L5}
  Assume $u \in C ^ { \infty } \left( \overline{\Sigma}\right)$ is a solution of $\Delta_g u = f ( x )$ in $(\Sigma, g)$ and satisfies $ { \| u \| _ { 1 } \leq c _ { 0 } \| f \| _ { 1 } }$. Then for any $1 < q < 2 $, there holds $\| \nabla_g u \| _ { q } \leq C \left( q , c _ { 0 } , \Sigma,\ g \right) \| f \| _ { 1 }.$
\end{lemma}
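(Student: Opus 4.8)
The statement to prove is Lemma~\ref{L5}: an $L^1$-type elliptic estimate asserting that if $\Delta_g u = f$ on the compact surface with boundary $(\Sigma,g)$ and $\|u\|_1 \le c_0\|f\|_1$, then $\|\nabla_g u\|_q \le C(q,c_0,\Sigma,g)\|f\|_1$ for every $1<q<2$. This is the two-dimensional analogue of the classical fact that the Newtonian potential of an $L^1$ function lies in $W^{1,q}$ for $q<n/(n-1)$, adapted to a manifold with boundary.

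\medskip

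\noindent\textbf{Plan of proof.} The idea is to reduce to a Green's representation formula and then estimate the gradient of the Green kernel. First I would recall (or construct) the Green function $G(x,y)$ for the Laplace--Beltrami operator on $(\Sigma,g)$ subject to the Neumann boundary condition $\partial G/\partial\mathbf{n}=\text{const}$ on $\partial\Sigma$ (the natural condition here, since $u$ in the application satisfies $\partial u/\partial\mathbf{n}=0$), normalized so that $\int_\Sigma G(x,y)\,dv_g(y)=0$. Such a Green function exists because the relevant compatibility condition $\int_\Sigma f\,dv_g = \int_{\partial\Sigma}\partial u/\partial\mathbf{n}$ is accounted for, and it has the standard logarithmic singularity: in local coordinates near the diagonal one has $G(x,y) = -\frac{1}{2\pi}\log\mathrm{dist}_g(x,y) + \omega(x,y)$ with $\omega$ bounded together with its first derivatives, and crucially
\begin{eqnarray*}
|\nabla_{g,x} G(x,y)| \le \frac{C}{\mathrm{dist}_g(x,y)}
\end{eqnarray*}
uniformly for $x,y\in\overline\Sigma$. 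This bound holds up to and including the boundary because the Neumann Green function can be built from the interior one by a reflection/doubling argument across $\partial\Sigma$ (exactly the doubling $\tilde\Sigma$ already used elsewhere in the paper via isothermal coordinates), after which $\nabla_x G$ is controlled by the sum of two interior kernels.

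\medskip

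\noindent\textbf{Key steps in order.} (1) Write $u(x) = \bar u + \int_\Sigma G(x,y) f(y)\,dv_g(y)$ where $\bar u = \frac{1}{\mathrm{Area}(\Sigma)}\int_\Sigma u\,dv_g$; this is legitimate since $u\in C^\infty(\overline\Sigma)$ and $G$ is the Neumann Green function. (2) Differentiate under the integral sign: $\nabla_g u(x) = \int_\Sigma \nabla_{g,x}G(x,y) f(y)\,dv_g(y)$, so $|\nabla_g u(x)| \le C\int_\Sigma \frac{|f(y)|}{\mathrm{dist}_g(x,y)}\,dv_g(y)$. (3) Estimate the $L^q$ norm of the right-hand side. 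The kernel $K(x,y)=\mathrm{dist}_g(x,y)^{-1}$ satisfies $\sup_x\int_\Sigma K(x,y)^q\,dv_g(y) \le C_q < \infty$ precisely when $q<2$, since near the singularity $\int_{B_\rho} r^{-q}\,r\,dr\,d\theta$ converges iff $q<2$, and by symmetry the same holds with the roles of $x,y$ swapped. Then either apply the Minkowski integral inequality / Young's inequality for integral operators with a kernel bounded in the appropriate mixed norm, or argue directly: by Jensen/Hölder with the measure $|f(y)|\,dv_g(y)$,
\begin{eqnarray*}
\int_\Sigma |\nabla_g u(x)|^q\,dv_g(x)
\le C\|f\|_1^{q-1}\int_\Sigma\int_\Sigma K(x,y)^q |f(y)|\,dv_g(y)\,dv_g(x)
\le C\, C_q\, \|f\|_1^{q},
\end{eqnarray*}
which gives $\|\nabla_g u\|_q \le C(q,\Sigma,g)\|f\|_1$. (The hypothesis $\|u\|_1\le c_0\|f\|_1$ is what lets one also bound $\|u\|_q$ if desired, but for the gradient bound it is the representation formula that does the work; I keep $c_0$ in the constant as the statement allows.) The claim that one may differentiate under the integral and that the kernel bound is uniform up to $\partial\Sigma$ should be justified by working in finitely many coordinate charts covering $\overline\Sigma$ and using the doubled surface near boundary charts.

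\medskip

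\noindent\textbf{Main obstacle.} The only genuinely delicate point is establishing the gradient bound $|\nabla_{g,x}G(x,y)| \le C\,\mathrm{dist}_g(x,y)^{-1}$ \emph{uniformly up to the boundary} for the Neumann Green function; the interior estimate is classical, but near $\partial\Sigma$ one must verify that the boundary correction term in $G$ contributes a kernel of the same or milder order. The cleanest route is the doubling construction: reflect across $\partial\Sigma$ using the isothermal boundary coordinates of \cite{Yang-Zhou}, obtaining a closed (Lipschitz, or smooth after the conformal factor is handled) surface on which the Neumann problem becomes an interior problem whose Green function is the sum/average of two interior Green functions of the double; the stated gradient bound then follows from the interior case applied on the double. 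Everything else — the representation formula, differentiating under the integral, and the $r^{-q}$ integrability computation — is routine. This is why the proof can be stated compactly "after a slight modification of (\cite{Yang2007}, Lemma 4.8)," where the analogous estimate on a closed surface is carried out.
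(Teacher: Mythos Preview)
Your approach is correct. The paper gives no self-contained proof here; it simply defers to \cite{Yang2007}, Lemma~4.8 (the closed-surface analogue) and asserts that a ``slight modification'' handles the boundary. Your Green-function argument is one standard way to supply that modification: the Neumann Green representation, the gradient bound $|\nabla_{g,x} G(x,y)|\le C\,\mathrm{dist}_g(x,y)^{-1}$ obtained by doubling across $\partial\Sigma$, and the Jensen/Fubini estimate on the resulting Riesz-type potential are all valid as you describe them.

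For comparison, the argument in \cite{Yang2007} is phrased by duality rather than through an explicit kernel: for $q'=q/(q-1)>2$ one pairs $\nabla_g u$ against an arbitrary vector field of unit $L^{q'}$ norm, passes to its potential part $\nabla_g\psi$, integrates by parts to obtain $\int_\Sigma f\,\psi$, and invokes $W^{1,q'}\hookrightarrow C^0$ in dimension two to control $\|\psi\|_\infty$. That route trades your explicit boundary kernel estimate for off-the-shelf $L^p$ elliptic regularity plus Sobolev embedding; the two proofs are essentially dual to one another, and neither is simpler in any decisive way. You were also right to flag that the Green representation (and, equally, the integration by parts in the duality argument) uses $\partial u/\partial\mathbf{n}=0$, a hypothesis the lemma statement omits but which holds in its only application to $c_\epsilon u_\epsilon$ via \eqref{e-u}.
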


\begin{lemma}\label{LG}
  For any $1 < q < 2$, $c _ { \epsilon } u _ { \epsilon }$ is bounded in $W ^ { 1 , q } (\Sigma,  g)$.
  Moreover, there holds
	\begin{eqnarray*}
	\left\{ \begin{aligned}
	&c_{\epsilon}u_{\epsilon}\rightharpoonup G\, \, \mathrm{weakly\, \, in\,}\, W^{1,q}\left(\Sigma,  g \right), \,\forall 1<q<2, \\
	&c_{\epsilon}u_{\epsilon}\rightarrow G\, \, \mathrm{strongly\, \, in  \,}\, L^s\left(\Sigma,  g \right), \,\forall 1<s<\frac{2q}{2-q}, \\
	&c_{\epsilon}u_{\epsilon}\rightarrow G\ \mathrm{in\,}\, {C_{loc}^{1}\left( \Sigma\backslash \left\{ x_0 \right\} \right) },
	\end{aligned} \right.
	\end{eqnarray*}
where $G$ is a Green function satisfying \begin{eqnarray}\label{e-G}
	\left\{ \begin{aligned}
&\Delta_g G=\delta _{x_0}+\alpha G-\frac{1}{{\rm Area}\left( \Sigma \right)}\  {\mathrm{in}}\  \Sigma,\\
&\frac{\partial G}{\partial \mathbf{n}}=0\  {\mathrm{on}}\  {\partial \Sigma}\backslash \left\{ x_0 \right\},\\
&\int_{\Sigma}{G\,dv_g}=0.
	\end{aligned} \right.
\end{eqnarray}
\end{lemma}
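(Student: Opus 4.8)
The strategy is standard for blow-up arguments of this type: one first establishes an $L^1$-bound on the right-hand side of the equation satisfied by $c_\epsilon u_\epsilon$, then invokes the gradient estimate of Lemma \ref{L5} to get a uniform $W^{1,q}$-bound, and finally identifies the weak limit by passing to the limit in the equation. Write down the equation for $c_\epsilon u_\epsilon$ obtained by multiplying (\ref{e-u}) by $c_\epsilon$:
\begin{eqnarray*}
\Delta_g (c_\epsilon u_\epsilon) = \frac{\beta_\epsilon}{\lambda_\epsilon} c_\epsilon u_\epsilon e^{\alpha_\epsilon u_\epsilon^2} + \gamma_\epsilon (c_\epsilon u_\epsilon) - \frac{c_\epsilon \mu_\epsilon}{\lambda_\epsilon} \ \ \mathrm{in}\ \Sigma, \qquad \frac{\partial (c_\epsilon u_\epsilon)}{\partial \mathbf{n}} = 0 \ \ \mathrm{on}\ \partial\Sigma.
\end{eqnarray*}
Call the right-hand side $f_\epsilon$. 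The key point is that $\|f_\epsilon\|_1 \leq C$ uniformly: the first term integrates to a bounded quantity by (\ref{10}) (or directly by H\"older together with Lemma \ref{L3} and (\ref{L2}), since $c_\epsilon u_\epsilon e^{\alpha_\epsilon u_\epsilon^2}/\lambda_\epsilon$ has bounded $L^1$-norm exactly as in the estimate (\ref{4})); the second term is controlled by $\gamma_\epsilon \|c_\epsilon u_\epsilon\|_1 \leq C c_\epsilon \|u_\epsilon\|_1$, and one needs here the a priori fact that $c_\epsilon \|u_\epsilon\|_1$ stays bounded; the third term is bounded because $c_\epsilon \mu_\epsilon/\lambda_\epsilon \leq C c_\epsilon \|u_\epsilon\|_1 / \lambda_\epsilon$ up to constants, again using (\ref{L2}). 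To close the loop I would first check that $\|c_\epsilon u_\epsilon\|_1 \leq c_0 \|f_\epsilon\|_1$ in the sense required by Lemma \ref{L5}: test the equation against a suitable function, or simply note that $\int_\Sigma c_\epsilon u_\epsilon\,dv_g = 0$ and use the Neumann Green representation to write $c_\epsilon u_\epsilon = \int_\Sigma G_\Sigma(\cdot,y) \tilde f_\epsilon(y)\,dv_g(y)$ where $\tilde f_\epsilon = f_\epsilon - \frac{1}{\mathrm{Area}(\Sigma)}\int_\Sigma f_\epsilon$, whence $\|c_\epsilon u_\epsilon\|_1 \leq C\|f_\epsilon\|_1$. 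This simultaneously shows $\|c_\epsilon u_\epsilon\|_1$ is bounded in terms of $\|f_\epsilon\|_1$ and removes the circularity in bounding the $\gamma_\epsilon$-term (one absorbs it: $\|f_\epsilon\|_1 \leq C_1 + C_2 \gamma_\epsilon \|c_\epsilon u_\epsilon\|_1 \leq C_1 + C_2 C \gamma_\epsilon \|f_\epsilon\|_1$, and since $\gamma_\epsilon \to \alpha/(1+2\alpha\|u_0\|_2^2)$ is small... no — better to absorb differently). In fact $\gamma_\epsilon$ need not be small, so the clean route is: the operator $\Delta_g - \gamma_\epsilon$ with Neumann conditions is invertible (for $\gamma_\epsilon$ not a Neumann eigenvalue, which holds since $\gamma_\epsilon < \alpha < \lambda_1(\Sigma)$ and $0$ is excluded by the mean-value-zero projection), and its inverse is bounded $L^1 \to L^1$ after projecting off constants; so $\|c_\epsilon u_\epsilon\|_1 \leq C(\|{\tfrac{\beta_\epsilon}{\lambda_\epsilon}} c_\epsilon u_\epsilon e^{\alpha_\epsilon u_\epsilon^2}\|_1 + |c_\epsilon \mu_\epsilon/\lambda_\epsilon|) \leq C$.

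Once $\|c_\epsilon u_\epsilon\|_1 \leq c_0 \|f_\epsilon\|_1$ with $\|f_\epsilon\|_1 \leq C$ is in hand, Lemma \ref{L5} gives $\|\nabla_g(c_\epsilon u_\epsilon)\|_q \leq C(q,\Sigma,g)$ for every $1 < q < 2$, so $c_\epsilon u_\epsilon$ is bounded in $W^{1,q}(\Sigma,g)$. By reflexivity and the compact Sobolev embedding $W^{1,q}(\Sigma,g) \hookrightarrow L^s(\Sigma,g)$ for $1 \leq s < 2q/(2-q)$, we extract a subsequence with $c_\epsilon u_\epsilon \rightharpoonup G$ weakly in $W^{1,q}$ and strongly in $L^s$. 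Since $\int_\Sigma c_\epsilon u_\epsilon\,dv_g = 0$ and $L^1$-convergence holds, $\int_\Sigma G\,dv_g = 0$. The interior (and up-to-boundary-away-from-$x_0$) $C^1_{loc}$ convergence on $\Sigma\setminus\{x_0\}$ follows from elliptic estimates: away from $x_0$ the term $\frac{\beta_\epsilon}{\lambda_\epsilon} c_\epsilon u_\epsilon e^{\alpha_\epsilon u_\epsilon^2}$ is bounded in $L^{q'}$ for some $q' > 1$ — indeed on any compact $K \subset \overline\Sigma\setminus\{x_0\}$ one has, by Lemma \ref{L3} (energy concentrating at $x_0$) combined with (\ref{2006Y}), that $e^{\alpha_\epsilon u_\epsilon^2}$ is bounded in $L^{q'}(K,g)$, while $c_\epsilon u_\epsilon / \lambda_\epsilon \cdot \lambda_\epsilon^{-1}\cdots$ — more simply, $c_\epsilon u_\epsilon$ is bounded in $L^s(K)$, so the product is bounded in $L^{q''}(K)$ for some $q'' > 1$; then $\Delta_g(c_\epsilon u_\epsilon)$ is bounded in $L^{q''}(K)$ and Schauder/Calderón–Zygmund estimates upgrade the convergence to $C^1_{loc}(\Sigma\setminus\{x_0\})$.

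Finally I would identify $G$. Pass to the limit in the weak formulation: for any $\phi \in C^\infty(\overline\Sigma)$,
\begin{eqnarray*}
\int_\Sigma \nabla_g(c_\epsilon u_\epsilon)\cdot\nabla_g\phi\,dv_g = \int_\Sigma \frac{\beta_\epsilon}{\lambda_\epsilon} c_\epsilon u_\epsilon e^{\alpha_\epsilon u_\epsilon^2}\phi\,dv_g + \gamma_\epsilon \int_\Sigma c_\epsilon u_\epsilon \phi\,dv_g - \frac{c_\epsilon\mu_\epsilon}{\lambda_\epsilon}\int_\Sigma\phi\,dv_g.
\end{eqnarray*}
The left side tends to $\int_\Sigma \nabla_g G\cdot\nabla_g\phi\,dv_g$ by weak $W^{1,q}$ convergence (testing against $\nabla_g\phi \in L^{q'}$). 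By (\ref{10}) the first term on the right tends to $\phi(x_0)$. By strong $L^s$ convergence, $\gamma_\epsilon\int_\Sigma c_\epsilon u_\epsilon\phi\,dv_g \to \alpha\int_\Sigma G\phi\,dv_g$ (using $\gamma_\epsilon \to \alpha$, valid because $\|u_\epsilon\|_2 \to \|u_0\|_2 = 0$ by Lemma \ref{L3}, so actually $\gamma_\epsilon \to \alpha$). For the constant term, $c_\epsilon\mu_\epsilon/\lambda_\epsilon = \frac{\beta_\epsilon}{\mathrm{Area}(\Sigma)}\int_\Sigma \frac{c_\epsilon u_\epsilon e^{\alpha_\epsilon u_\epsilon^2}}{\lambda_\epsilon}\,dv_g \to \frac{1}{\mathrm{Area}(\Sigma)}$ by (\ref{10}) again (total mass $1$, and $\beta_\epsilon \to 1$). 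Hence $\int_\Sigma \nabla_g G\cdot\nabla_g\phi\,dv_g = \phi(x_0) + \alpha\int_\Sigma G\phi\,dv_g - \frac{1}{\mathrm{Area}(\Sigma)}\int_\Sigma\phi\,dv_g$ for all test $\phi$, which is exactly the weak form of (\ref{e-G}) including the Neumann condition on $\partial\Sigma\setminus\{x_0\}$; together with $\int_\Sigma G\,dv_g = 0$ this pins down $G$ uniquely. The main obstacle is the first step — securing the uniform $L^1$-bound on $f_\epsilon$ and, in particular, handling the lower-order term $\gamma_\epsilon c_\epsilon u_\epsilon$ without circular reasoning; the cleanest fix is to work with the shifted operator $\Delta_g - \gamma_\epsilon$ (invertible on mean-zero functions since $\gamma_\epsilon < \lambda_1(\Sigma)$) so that $c_\epsilon u_\epsilon$ is expressed directly in terms of the genuinely bounded data $\frac{\beta_\epsilon}{\lambda_\epsilon}c_\epsilon u_\epsilon e^{\alpha_\epsilon u_\epsilon^2}$ and the bounded constant $c_\epsilon\mu_\epsilon/\lambda_\epsilon$, after which Lemma \ref{L5} applies verbatim.
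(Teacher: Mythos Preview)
Your proposal is correct and follows essentially the same route as the paper: multiply (\ref{e-u}) by $c_\epsilon$, bound the right-hand side in $L^1$, invoke Lemma \ref{L5} for the $W^{1,q}$ bound, extract weak/strong limits by compact embedding, upgrade to $C^1_{loc}$ away from $x_0$ via local $L^s$-bounds on $e^{\alpha_\epsilon u_\epsilon^2}$ (energy concentration plus (\ref{2006Y})), and identify $G$ by testing against $\phi\in C^1(\overline\Sigma)$ using (\ref{10}) and $\gamma_\epsilon\to\alpha$, $\beta_\epsilon\to1$, $c_\epsilon\mu_\epsilon/\lambda_\epsilon\to 1/\mathrm{Area}(\Sigma)$.

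The one place you go further than the paper is the lower-order term $\gamma_\epsilon c_\epsilon u_\epsilon$. The paper simply asserts that $\Delta_g(c_\epsilon u_\epsilon)$ is bounded in $L^1(\Sigma,g)$, citing Lemma \ref{L3} and (\ref{e-u}), (\ref{4}), (\ref{10})--(\ref{12}), without explaining how this term is controlled before $\|c_\epsilon u_\epsilon\|_1\leq C$ is known. You correctly flag the circularity and propose working with $\Delta_g-\gamma_\epsilon$, invertible on mean-zero functions since $0<\gamma_\epsilon<\alpha<\lambda_1(\Sigma)$. That is a legitimate fix: the Green function for $\Delta_g-\gamma_\epsilon$ has the same logarithmic singularity as the one for $\Delta_g$, uniformly in $\epsilon$, so expressing $c_\epsilon u_\epsilon$ through it against the genuinely $L^1$-bounded data $\frac{\beta_\epsilon}{\lambda_\epsilon}c_\epsilon u_\epsilon e^{\alpha_\epsilon u_\epsilon^2}-\frac{c_\epsilon\mu_\epsilon}{\lambda_\epsilon}$ yields $\|c_\epsilon u_\epsilon\|_1\leq C$. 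Once that is in hand, the full right-hand side of $\Delta_g(c_\epsilon u_\epsilon)=f_\epsilon$ is bounded in $L^1$ and Lemma \ref{L5} applies exactly as stated; so your phrase ``Lemma \ref{L5} applies verbatim'' should be read in that two-step sense rather than as a direct application to the shifted operator.
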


\begin{proof}
It follows from (\ref{e-u}) that
 	\begin{eqnarray}\label{11}
\Delta_g \left( c_{\epsilon}u_{\epsilon} \right) =\frac{\beta _{\epsilon}}{\lambda _{\epsilon}}c_{\epsilon}u_{\epsilon}e^{\alpha _{\epsilon}u_{\epsilon}^{2}}+\gamma _{\epsilon}c_{\epsilon}u_{\epsilon}-c_{\epsilon}\frac{\mu _{\epsilon}}{\lambda _{\epsilon}}.
	\end{eqnarray}
According to (\ref{e-u}) and (\ref{10}), we have
\begin{eqnarray}\label{12}
\left| \frac{c_{\epsilon}\mu _{\epsilon}}{\lambda _{\epsilon}} \right|=\frac{1}{\text{Area}\left( \Sigma \right)}\int_{\Sigma}{\frac{\beta _{\epsilon}}{\lambda _{\epsilon}}}c_{\epsilon}u_{\epsilon}e^{\alpha _{\epsilon}u_{\epsilon}^{2}}\,dv_g
= \frac{1}{{\rm Area}\left( \Sigma \right)}\left( 1+o_{\epsilon}\left( 1 \right) \right).
\end{eqnarray}
In view of Lemma \ref{L3}, (\ref{e-u}), (\ref{4}), (\ref{10}), (\ref{11}) and (\ref{12}), we have $\Delta_g \left( c _ { \epsilon } u _ { \epsilon } \right)$ is bounded in $L^1(\Sigma,  g)$.
From Lemma \ref{L5}, there holds $c _ { \epsilon } u _ { \epsilon }$ is bounded in $W ^ { 1 , q } (\Sigma,  g)$ for any $1 < q < 2$.
Then $c_{\epsilon}u_{\epsilon}\rightharpoonup G\, \, \mathrm{weakly\, \, in\,}\, W^{1,q}\left(\Sigma,  g \right)$ for any $1<q<2$ and $c_{\epsilon}u_{\epsilon}\rightarrow G\, \, \mathrm{strongly\, \, in  \,}\, L^s\left(\Sigma, g \right)$ for any $1<s<{2q}/{(2-q)}$.

We choose a cut-off function $\rho$ in $C^{\infty}\left( \overline{\Sigma} \right)$, which is equal to $0$ in $\overline{B_\delta(x_0)}$ and equal to $1$ in $\Sigma\backslash B_{2\delta}(x_0)$ such that
$\lim_{\epsilon\rightarrow0}{\lVert \nabla_g \left( \rho u_{\epsilon} \right) \rVert _{2}^{2}}=0.$
Hence there holds
\begin{eqnarray*}
  \int_{\Sigma \backslash B_{2\delta}\left( x_0 \right)}{e^{s\alpha _{\epsilon}u_{\epsilon}^{2}}}dx\leqslant \int_{\Sigma \backslash B_{2\delta}\left( x_0 \right)}{e^{s\alpha _{\epsilon}\lVert \nabla_g \left( \rho u_{\epsilon} \right) \rVert _{2}^{2}\frac{\rho ^2u_{\epsilon}^{2}}{\lVert \nabla_g \left( \rho u_{\epsilon} \right) \rVert _{2}^{2}}}}dx.
\end{eqnarray*}
From the Trudinger-Moser inequality (\ref{2006Y}),  ${e^{\alpha _{\epsilon}u_{\epsilon}^{2}}}$ is bounded in $L^s\left( \Sigma, g \right) $ for some $s>1$.
Applying the elliptic estimate and the compact embedding theorem to (\ref{11}), we obtain $c_{\epsilon}u_{\epsilon}\rightarrow G$ in $ C_{loc}^{1}\left( {\Sigma }\backslash \left\{ x_0 \right\} \right).$
Testing (\ref{11}) by $\phi \in C^1\left( \Sigma \right)$, we obtain (\ref{e-G}).
	\end{proof}

Applying the elliptic estimate, we can decompose $G$ as the form
\begin{eqnarray}\label{G}
  G=-\frac{1}{\pi}\log |x-x_0|+A_{x_0}+\sigma(x),
\end{eqnarray}
where $A_{x_0}$ is a constant only on $x_0$ and $\sigma(x)\in C^{\infty}\left( \overline{\Sigma}\right)$ with $\sigma(x_0)=0$.

\subsection*{\textbf{Step 3.} Upper bound estimate}
To derive an upper bound of $\sup _ { u \in \mathcal { S} } F _\alpha^ { 2 \pi }( u )$, we use the capacity estimate, which was first used by Li \cite{Li-JPDE} in this topic.

	\begin{lemma}\label{L8}
There holds
	\begin{eqnarray*}\label{15}
	\sup _ { u \in \mathcal { S} } F^ { 2 \pi } _ { \alpha } ( u ) \leq {\rm Area}( \Sigma) + \frac{\pi}{2} e ^ { 1 + 2 \pi A _ { x_0} }.
    \end{eqnarray*}
	\end{lemma}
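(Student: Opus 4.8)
The plan is to reduce the estimate, through Lemma~\ref{L6}, to an upper bound for $\lambda_\epsilon/c_\epsilon^2$, and then to obtain that bound by a capacity (Dirichlet energy) estimate on the neck region, as pioneered by Li~\cite{Li-JPDE}. By Lemma~\ref{L6} and (\ref{3}),
\[
\sup_{u\in\mathcal S}F_\alpha^{2\pi}(u)=\mathrm{Area}(\Sigma)+\lim_{\epsilon\to0}\frac{\lambda_\epsilon}{c_\epsilon^2}=\mathrm{Area}(\Sigma)+\lim_{R\to\infty}\lim_{\epsilon\to0}\int_{\phi^{-1}(\mathbb{B}^+_{Rr_\epsilon}(\tilde x_\epsilon))}e^{\alpha_\epsilon u_\epsilon^2}\,dv_g ,
\]
so it is enough to prove $\limsup_{\epsilon\to0}\log(\lambda_\epsilon/c_\epsilon^2)\le 1+2\pi A_{x_0}+\log\tfrac\pi2$, which needs no a priori control on $\lambda_\epsilon/c_\epsilon^2$. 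I work in the isothermal chart $(U,\phi)$ at $x_0\in\partial\Sigma$, with the even reflections $\tilde u_\epsilon,\tilde f$ across $\partial\mathbb{R}^2_+$ defined just before (\ref{psi}), and use the conformal invariance of the Dirichlet integral in two dimensions, so that $\int_{\mathbb{B}_\rho}|\nabla\tilde u_\epsilon|^2\,dx=2\int_{\phi^{-1}(\mathbb{B}^+_\rho)}|\nabla_g u_\epsilon|^2\,dv_g$ for $\rho\le r$.

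Fix $R>1$ large and $\delta>0$ small, and split $\mathbb{B}_\delta$ into the bubble $\mathbb{B}_{Rr_\epsilon}(\tilde x_\epsilon)$, the concentric neck $N_\epsilon=\mathbb{B}_\delta(\tilde x_\epsilon)\setminus\mathbb{B}_{Rr_\epsilon}(\tilde x_\epsilon)$, and the far region $\Sigma\setminus\phi^{-1}(\mathbb{B}^+_\delta)$ (the $o(r_\epsilon)$ displacement of $\tilde x_\epsilon$ from the $x_1$-axis affects only negligible terms). Put $a_\epsilon=\min_{\partial\mathbb{B}_{Rr_\epsilon}(\tilde x_\epsilon)}\tilde u_\epsilon$ and $b_\epsilon=\max_{\partial\mathbb{B}_\delta(\tilde x_\epsilon)}\tilde u_\epsilon$. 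From (\ref{6})--(\ref{7}), the radial $\varphi$ is constant on $|y|=R$, so $a_\epsilon=c_\epsilon+c_\epsilon^{-1}\big(\varphi(R)+o_\epsilon(1)\big)$; from Lemma~\ref{LG} and (\ref{G}), $b_\epsilon=c_\epsilon^{-1}\big(-\tfrac1\pi\log\delta+A_{x_0}+O(\delta)+o_\epsilon(1)\big)$. Replacing $\tilde u_\epsilon$ on $N_\epsilon$ by $\min\{\max\{\tilde u_\epsilon,b_\epsilon\},a_\epsilon\}$ (which equals $a_\epsilon$ on the inner and $b_\epsilon$ on the outer circle) and using that the radial logarithmic function minimizes the Dirichlet energy among functions with these boundary values on a concentric annulus, I obtain the capacity estimate $\int_{N_\epsilon}|\nabla\tilde u_\epsilon|^2\,dx\ge 2\pi(a_\epsilon-b_\epsilon)^2/\log(\delta/(Rr_\epsilon))$.

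For the energy budget, $\int_{\mathbb{B}_\delta}|\nabla\tilde u_\epsilon|^2\,dx=2-\tfrac2{c_\epsilon^2}\big(\int_{\Sigma\setminus B_\delta(x_0)}|\nabla_g G|^2\,dv_g+o_\epsilon(1)\big)$ by the convergence $c_\epsilon u_\epsilon\to G$ of Lemma~\ref{LG} (up to $\partial\Sigma$, away from $x_0$); on the bubble, a change of variables and (\ref{6}) give $\int_{\mathbb{B}_{Rr_\epsilon}(\tilde x_\epsilon)}|\nabla\tilde u_\epsilon|^2\,dx=c_\epsilon^{-2}\big(\tau_R+o_\epsilon(1)\big)$, where $\tau_R:=\int_{\mathbb{B}_R}|\nabla\varphi|^2\,dx=\tfrac1\pi\log(1+\tfrac\pi2 R^2)-\tfrac1\pi\cdot\tfrac{\pi R^2/2}{1+\pi R^2/2}$ is computed by integrating $-\Delta\varphi=e^{4\pi\varphi}$ by parts with (\ref{7}) and (\ref{2}). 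Since bubble $+$ neck $=\int_{\mathbb{B}_\delta}|\nabla\tilde u_\epsilon|^2\,dx$, combining with the capacity estimate and using $\log(\delta/(Rr_\epsilon))\sim\pi c_\epsilon^2$ yields
\[
2\pi(a_\epsilon-b_\epsilon)^2\le 2\log\frac{\delta}{Rr_\epsilon}-\pi\Big(2\int_{\Sigma\setminus B_\delta(x_0)}|\nabla_g G|^2\,dv_g+\tau_R\Big)+o_\epsilon(1).
\]
Now insert the asymptotics of $a_\epsilon,b_\epsilon$, and on the right use (\ref{r}) in the form $\log(\delta/(Rr_\epsilon))=\tfrac12\alpha_\epsilon c_\epsilon^2-\tfrac12\log(\lambda_\epsilon/c_\epsilon^2)+\tfrac12\log\beta_\epsilon+\log\delta-\log R$, together with the facts (all consequences of $u_0\equiv0$ by Lemma~\ref{L3} and $\|c_\epsilon u_\epsilon\|_2^2\to\|G\|_2^2$) that $\alpha_\epsilon\to2\pi$, $\beta_\epsilon\to1$ and $(\alpha_\epsilon-2\pi)c_\epsilon^2\le 2\pi\alpha\|G\|_2^2+o_\epsilon(1)$, the last by discarding the nonpositive term $-\epsilon c_\epsilon^2$. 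Letting $\epsilon\to0$, then $R\to\infty$, then $\delta\to0$: the divergences in $R$ cancel, $-4\pi\varphi(R)-2\log R-\pi\tau_R\to\log\tfrac\pi2+1$; and testing (\ref{e-G}) against $G$ gives $\int_{\Sigma\setminus B_\delta(x_0)}|\nabla_g G|^2\,dv_g=-\tfrac1\pi\log\delta+A_{x_0}+\alpha\|G\|_2^2+o_\delta(1)$, so the $\log\delta$ and $\alpha\|G\|_2^2$ contributions cancel as well, leaving $\limsup_{\epsilon\to0}\log(\lambda_\epsilon/c_\epsilon^2)\le 1+2\pi A_{x_0}+\log\tfrac\pi2$, i.e. the lemma.

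The main obstacle is the constant bookkeeping: tracking every $o_\epsilon(1)$, $O(\delta)$, $o_R(1)$ so that precisely the constant $\tfrac\pi2 e^{1+2\pi A_{x_0}}$ emerges, which rests on two nontrivial cancellations — the logarithmic-in-$R$ cancellation between the bubble's boundary value $\varphi(R)\sim-\tfrac1\pi\log R$ and its energy $\tau_R\sim\tfrac1\pi\log(\tfrac\pi2 R^2)$, and the cancellation of the $\log\delta$ and $\|G\|_2^2$ terms among the $\alpha_\epsilon c_\epsilon^2$-expansion, the far-field energy, and the Green identity for (\ref{e-G}), where the potential term $\alpha G$ in (\ref{e-G}) is exactly what cancels the $\|G\|_2^2$ pieces. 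One must also carry out the capacity estimate carefully (it is clean once one passes to the extremal boundary values $a_\epsilon,b_\epsilon$ on a concentric annulus and uses that $\tilde x_\epsilon$ sits within $o(r_\epsilon)$ of the boundary axis), and note that, since only the nonpositive $-\epsilon c_\epsilon^2$ is discarded, the method gives the upper bound but not equality — sharpness and the extremal are the separate subjects of Sections~2 and~4.
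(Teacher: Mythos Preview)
Your proposal is correct and follows the same capacity-estimate strategy as the paper, with only a cosmetic difference in implementation. The paper works directly in the half-annulus $\mathbb{B}^+_\delta\setminus\mathbb{B}^+_{Rr_\epsilon}$ with the Neumann condition on $\partial\mathbb{R}^2_+$ (so the harmonic minimizer has energy $\pi(a-b)^2/\log(\delta/(Rr_\epsilon))$), whereas you pass to the even reflection $\tilde u_\epsilon$ and run the standard full-annulus capacity estimate (giving $2\pi(a_\epsilon-b_\epsilon)^2/\log(\delta/(Rr_\epsilon))$) and then halve all energies; the factors of $2$ cancel and the two computations are line-by-line equivalent. Your explicit tracking of $(\alpha_\epsilon-2\pi)c_\epsilon^2\le 2\pi\alpha\|G\|_2^2+o_\epsilon(1)$ and of the two cancellations (the $\log R$ cancellation between $\varphi(R)$ and $\tau_R$, and the $\log\delta$, $\alpha\|G\|_2^2$ cancellation via the Green identity for (\ref{e-G})) matches exactly what the paper does in (\ref{17})--(\ref{22}); the paper is just terser about it. The one point you flag --- that $\tilde x_\epsilon$ sits within $o(r_\epsilon)$ of the axis so the reflected annulus is essentially concentric --- is indeed implicit in the paper's derivation of (\ref{8}), and both treatments handle the non-concentricity of the inner ($\tilde x_\epsilon$-centered) and outer ($0$-centered) balls at the same informal level; the resulting errors are $o_\epsilon(1)$ and harmless.
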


\begin{proof}
We take an isothermal coordinate system $(U,\phi)$ near $x_0$ such that $\phi(x_0) = 0$, $\phi (U)\subset \mathbb { R } ^ { 2  }_+$ and $\phi (U\cap \partial\Sigma)\subset \partial\mathbb { R } ^ { 2  }_+$. In such coordinates, the metric $g$ has the representation $g = e^{2f} \left(dx_1^2 +dx_2^2\right )$ and $f$ is a smooth function with $f(0)  = 0$.
Denote $\tilde{u}_\epsilon=u_\epsilon\circ\phi^{-1}$.
We claim that
\begin{eqnarray}\label{16}
\lim_{\epsilon \rightarrow 0}\ \frac{\lambda _{\epsilon}}{c_{\epsilon}^{2}}\leq \frac{\pi}{2} e ^ { 1 + 2 \pi A _ { x_0} }.
\end{eqnarray}

To confirm this claim, we set
$ a={\sup_ { \partial \mathbb { B }_ { \delta }  \cap \mathbb { R } ^ { 2 }_+ } }\tilde{u}_\epsilon$ and $b=\inf _ { \partial \mathbb { B }_{Rr_\epsilon}\cap \mathbb { R } ^ { 2 }_+ }\tilde{u}_\epsilon$ for sufficiently small $\delta>0$ and some fixed $R>0$.
According to (\ref{6}), (\ref{7}), (\ref{G}) and Lemma \ref{LG}, one gets
\begin{eqnarray*}
 a=\frac{1}{c_{\epsilon}}\left( \frac{1}{\pi}\log \frac{1}{\delta}+A_{x_0}+o_\delta(1)+o_{\epsilon}(1)\right),\ \ \\
b=c_{\epsilon}+\frac{1}{c_{\epsilon}}\left( -\frac{1}{2\pi}\log \left( 1+\frac{\pi}{2} R^{2} \right) +o_{\epsilon}( 1 ) \right),
\end{eqnarray*}
where $o_\delta(1)\rightarrow0$ as $\delta\rightarrow 0$ and $o_{\epsilon}(1)\rightarrow0$ as $\epsilon\rightarrow 0$.
It follows from a direct computation that
\begin{eqnarray}\label{17}
\pi ( a-b ) ^2=\pi c_{\epsilon}^{2}+2\log \delta -2\pi A_{x_0}-\log \left( 1+\frac{\pi}{2}  R^2\ri ) +o_{\delta}( 1 ) +o_{\epsilon}( 1 ).
\end{eqnarray}
Define a function space
\begin{eqnarray*}
W_{a,b}=\left\{ \tilde{u}\in W^{1,2} \left(\mathbb{B}^+_ \delta\setminus \mathbb{B}^+ _{Rr_\epsilon} \right)  :\
\left.\tilde{u}\right|_{ \partial \mathbb { B }_ { \delta }  \cap \mathbb { R } ^ { 2 }_+ }=a,\,\,
\left.\tilde{u}\right|_{ \partial \mathbb { B }_{Rr_\epsilon}\cap \mathbb { R } ^ { 2 }_+ }=b,\,\,
\left.\frac{\partial \tilde{u}}{\partial \mathbf{v}}\right|_{\partial \mathbb{R}^{2}_+ \cap \left(\mathbb{B}_{\delta} \backslash \mathbb{B}_{R r_{\epsilon}}\right)}=0 \right\},
\end{eqnarray*}
where $\mathbf{v}$ denotes the outward unit normal vector on $\partial{ \mathbb { R } ^ { 2 }_+ }$.
Applying the direct method of variation, we obtain
$\inf_{u\in W_{a,b}} \int_{\mathbb{B}^+_ \delta \setminus \mathbb{B}^+ _{Rr_\epsilon}  }{|\nabla_{\mathbb{R}^2} u|^2}dx$ can be attained by some function $m(x)\in W_{a,b}$ with $\Delta_{\mathbb{R}^2} m( x ) =0$.
We can check that
\begin{eqnarray*}
m( x ) =\frac{a\left( \log|x|-\log ( {Rr_\epsilon} ) \right) +b\left( \log \delta -\log|x| \right)}{\log \delta -\log ( {Rr_\epsilon})}
\end{eqnarray*}
and
\begin{eqnarray}\label{19}
\int_{\mathbb{B}^+_ \delta \setminus \mathbb{B}^+ _{Rr_\epsilon}  }{|\nabla_{\mathbb{R}^2} m( x ) |^2}dx=\frac{\pi ( a-b) ^2}{\log \delta -\log ( {Rr_\epsilon} )}.
\end{eqnarray}
Recalling (\ref{e-u}) and (\ref{r}), we have
\begin{eqnarray}\label{20}
\log \delta -\log ( {Rr_\epsilon} ) =\log \delta -\log R-\frac{1}{2}\log \frac{\lambda _{\epsilon}}{\beta _{\epsilon}c_{\epsilon}^{2}}+\frac{1}{2}\alpha _{\epsilon}c_{\epsilon}^{2}.
\end{eqnarray}
Letting $u^*_{\epsilon}=\max \left\{ a,\ \min \left\{b,\ \tilde{u}_{\epsilon} \right\} \right\} \in W_{a,b}$, one gets $| \nabla_{\mathbb{R}^2} u^*_{\epsilon} |\leq | \nabla_{\mathbb{R}^2} \tilde{u}_{\epsilon} |$ in $\mathbb{B}^+_ \delta \setminus \mathbb{B}^+ _{Rr_\epsilon}  $ for sufficiently small $\epsilon$.
According to this and $\|\nabla_g u_{\epsilon}\|_2^2=1$, we obtain
\begin{eqnarray}
 \nonumber\int_{\mathbb{B}^+_ \delta \setminus \mathbb{B}^+ _{Rr_\epsilon}  }{|\nabla _{\mathbb{R}^2}m( x ) |^2}dx
 &\leq& \int_{\mathbb{B}^+_ \delta \setminus \mathbb{B}^+ _{Rr_\epsilon}  }{|\nabla _{\mathbb{R}^2}u_{\epsilon}^{*}( x ) |^2}dx\\
\label{21}&\leq& 1-\int_{\Sigma \backslash \phi ^{-1}\left(\mathbb{B}^+ _ \delta \right)}{| \nabla_g u_{\epsilon} |}^2\,dv_g-\int_{\phi ^{-1}\left( \mathbb{B}^+ _{Rr_\epsilon}  \right)}{| \nabla_g u_{\epsilon} |}^2\,dv_g.
\end{eqnarray}
Now we compute $\int_{\Sigma \backslash \phi ^{-1}\left(\mathbb{B}^+ _ \delta \right)}{| \nabla_g u_{\epsilon} |}^2\,dv_g$ and $\int_{\phi ^{-1}\left( \mathbb{B}^+ _{Rr_\epsilon}  \right)}{| \nabla_g u_{\epsilon} |}^2\,dv_g$.
In view of (\ref{e-G}) and (\ref{G}), we obtain
\begin{eqnarray*}
\int_{\Sigma \backslash \phi ^{-1}\left(\mathbb{B}^+ _ \delta \right)}{|}\nabla_g G|^2\,dv_g=\frac{1}{\pi}\log \frac{1}{\delta} +A_{x_0}+\alpha \lVert G \rVert _2^{2}+o_{\epsilon}( 1 ) +o_{\delta}( 1 ).
\end{eqnarray*}
Hence we have by Lemma \ref{LG}
\begin{eqnarray}\label{G3}
\int_{\Sigma \backslash \phi ^{-1}\left(\mathbb{B}^+ _ \delta \right)}{| \nabla_g u_{\epsilon} |}^2\,dv_g=\frac{1}{c_{\epsilon}^{2}}\left(\frac{1}{\pi}\log \frac{1}{\delta} +A_{x_0}+\alpha \lVert G \rVert _2^{2}+o_{\epsilon}( 1 ) +o_{\delta}( 1 )\right).
\end{eqnarray}
It follows from (\ref{phi}), (\ref{6}) and (\ref{7}) that
\begin{eqnarray}\label{22}
\int_{\phi ^{-1}\left( \mathbb{B}^+ _{Rr_\epsilon}  \right)}{| \nabla_g u_{\epsilon} |}^2\,dv_g=\frac{1}{c_{\epsilon}^{2}}\left( \frac{1}{2\pi}\log \left( 1+\frac{\pi}{2} R^2  \right) -\frac{1}{2\pi}+o_\epsilon\left( 1 \right)+o_{R}\left( 1 \right)\right),
\end{eqnarray}
where $o_R( 1 ) \rightarrow 0$ as $R\rightarrow +\infty $.
Recalling (\ref{17})-(\ref{22}), we obtain
\begin{eqnarray*}
\log\frac{\lambda _{\epsilon}}{c_{\epsilon}^{2}}\le \log \frac{\pi}{2}+1+2\pi A_{x_0} +o(1),
\end{eqnarray*}
where $o( 1 ) \rightarrow 0$ as $\epsilon \rightarrow 0$ first, then $R\rightarrow +\infty $ and $\delta \rightarrow 0$.
Hence (\ref{16}) is followed.
Combining (\ref{3}), (\ref{16}) and Lemma \ref{L6}, we finish the proof of the lemma.
\end{proof}
From Lemma \ref{L8}, the proof of Theorem \ref{T1} ($ii$) follows immediately under the hypothesis of $c_{\epsilon}\rightarrow +\infty$.

\section{Existence of the Extremal Functions}
The content in this section is carried out under the condition $0 \leq \alpha < \lambda_1(\Sigma)$ and $c_{\epsilon}\rightarrow +\infty$.
Set a cut-off function
$\xi \in C _ { 0 } ^ { \infty } \left( B _ { 2 R \epsilon } ( x_0 ) \right)$ with $\xi = 1$ on $\overline{B _ { R \epsilon } ( x_0)}$ and $\| \nabla_g \xi \| _ { L ^ { \infty } } = O (  (  R \epsilon )^{-1} )$. Denote $\tau =G+(\pi)^{-1}\log |x-x_0|-A_{x_0}$, where $G$ is defined as in (\ref{G}). Let $R= -\log \epsilon $, then $R\rightarrow+\infty$ and $R\epsilon\rightarrow0$ as $\epsilon\rightarrow0$.
We construct a blow-up sequence
	\begin{eqnarray}\label{23}
    v_{\epsilon}=\left\{\begin{aligned}
	&\frac{c^2-\frac{1}{2\pi}{\log \left( 1+\frac{\pi}{2} \frac{|x-x_0|^2}{\epsilon ^2} \right)} +b}{\sqrt{c^2+\alpha \lVert G \rVert _2^{2}}},&		\,\,& x\in {B_{R\epsilon}\left( x_0 \right) },\\
	&\frac{G-\xi \tau}{\sqrt{c^2+\alpha \lVert G \rVert _2^{2}}},&		\,\,\ \ &x\in B_{2R\varepsilon}( x_0 )\backslash B_{R\varepsilon}( x_0 ),\\
	&\frac{G}{\sqrt{c^2+\alpha \lVert G \rVert _2^{2}}},&		\,\,&x\in \Sigma \backslash B_{2R\varepsilon}( x_0 ),\\
\end{aligned} \right.
	\end{eqnarray}
where $b$ and $c$ are constants to be determined later.
In order to assure that $v _{\epsilon}\in  C^{\infty}\left( \overline{\Sigma}\right)$, we obtain
\begin{eqnarray}\label{c1}
c^2-\frac{1}{2\pi}\log \left( 1+\frac{\pi}{2} R^2 \right) +b=-\frac{1}{\pi}\log \left( R\epsilon\right) +A_{x_0}.\end{eqnarray}
It follows from $\lVert \nabla_g v _{\epsilon} \rVert _2=1$ that
\begin{eqnarray}\label{c2}
{c}^2=A_{x_0}-\frac{1}{\pi}\log \epsilon+\frac{1}{2\pi}\log\frac{ \pi}{2}-\frac{1}{2\pi}+O\left(\frac{1}{R^2}\right)+O\left( R\epsilon \log ( R\epsilon ) \right) +o_{\epsilon}\left( 1 \right).
\end{eqnarray}
In view of (\ref{c1}) and (\ref{c2}), we have
	\begin{eqnarray}\label{B}
b = \frac { 1 } { 2 \pi } + O\left(\frac{1}{R^2}\right)+ O \left( R \epsilon \log ( R \epsilon ) \right)+o_{\epsilon}\left( 1 \right).
	\end{eqnarray}
A delicate and simple calculation shows
\begin{eqnarray}\label{28}
\lVert v_{\epsilon} \rVert _{2}^{2}=\frac{\lVert G \rVert _{2}^{2}+O\left( R\epsilon \log \left( R\epsilon \right) \right)}{c^2+\alpha \lVert G \rVert _{2}^{2}}\geq \frac{\lVert G \rVert _{2}^{2}+O\left( R\epsilon \log \left( R\epsilon \right) \right)}{c^2}\left( 1-\frac{\alpha \lVert G \rVert _{2}^{2}}{c^2} \right),
\end{eqnarray}
which gives on $\left(B_{R\epsilon}\left( x_0 \right) , g\right)$
\begin{eqnarray*}2\pi v_{\epsilon}^{2}\left( 1+\alpha \lVert v_{\epsilon} \rVert _{2}^{2} \right) \ge 2\pi c^2+4\pi b-\text{2}\log \left( 1+\frac{\pi}{2}\frac{|x-x_0|^2}{\epsilon ^2} \right) -\frac{4\pi \alpha ^2\lVert G \rVert _{2}^{4}}{c^2}+O\left( \frac{\log R}{c^4} \right) .\end{eqnarray*}
Denote $ { v }^* _ { \epsilon } =   \int _ { \Sigma } v _ { \epsilon } \,dv_g/{ {\rm Area}( \Sigma) }$.
It is easy to know that $ { v }^* _ { \epsilon } = O \left( ( R \epsilon ) ^ { 2 } \log \epsilon\right)$ and $v_{\epsilon}-v_{\epsilon}^{*} \in \mathcal{S}$.
On the one hand, by (\ref{c1})-(\ref{28}), there holds
	\begin{eqnarray}\label{27}
\int_{B_{R\epsilon}\left( x_0 \right)}{e}^{2\pi \left( v_{\epsilon}-v_{\epsilon}^{*} \right) ^2\left( 1+\alpha \lVert v_{\epsilon}-v_{\epsilon}^{*} \rVert _{2}^{2} \right)}\,dv_g\ge \frac{\pi}{2}e^{1+2\pi A_{x_0}}-\frac{2\pi ^2\alpha ^2\lVert G \rVert _{2}^{4}}{c^2}e^{1+2\pi A_{x_0}}+O\left( \frac{\log R}{c^4} \right) +O\left( \frac{\log\log \epsilon}{R^2} \right).
	\end{eqnarray}	
On the other hand, from the fact of $e^t\geq t+1$ for any $t\geq0$ and (\ref{23}), one gets
	\begin{eqnarray}
\nonumber\int_{\Sigma \backslash B_{R\epsilon}( x_0)}{e}^{2\pi \left( v_{\epsilon}-v_{\epsilon}^{*} \right) ^2\left( 1+\alpha \lVert v_{\epsilon}-v_{\epsilon}^{*} \rVert _2^{2}\right)}\,dv_g
&\geq& \int_{\Sigma \backslash B_{2R\varepsilon}( x_0 )}{\left( 1+2\pi ( v_{\epsilon}-v_{\epsilon}^{*} ) ^2\right)}\,dv_g\\
   \label{24} &\geq& \text{Area}\left( \Sigma \right) +2\pi \frac{\lVert G \rVert _{2}^{2}}{c^2}+O\left( \frac{\log R}{c^4} \right) +O\left( R^2\epsilon^2 \right) .
	\end{eqnarray}
It follows from (\ref{27}) and (\ref{24}) that
	\begin{eqnarray*}\label{25}
\int_{\Sigma}{e}^{2\pi \left( v_{\epsilon}-v_{\epsilon}^{*} \right) ^2\left( 1+\alpha \lVert v_{\epsilon}-v_{\epsilon}^{*} \rVert _2^{2} \right)}\,dv_g&\geq
&\text{Area}\left( \Sigma \right) +\frac{\pi}{2}e^{1+2\pi A_{x_0}}+\frac{2\pi \lVert G \rVert _{2}^{2}}{c^2}\left( 1-\pi \alpha ^2\lVert G \rVert _{2}^{2}e^{1+2\pi A_{x_0}} \right) \\
&\ & +O\left( \frac{\log\log \epsilon}{R^2} \right) +O\left( \frac{\log R}{c^4} \right) +O\left( R^2\epsilon^2 \right).	
    \end{eqnarray*}
    According to $R= -\log \epsilon $ and (\ref{c1}), we obtain
	\begin{eqnarray}\label{26}
 F^ { 2 \pi } _ { \alpha } ( v _ { \epsilon } - v^*_ { \epsilon } ) > {\rm Area}( \Sigma) + \frac{\pi}{2} e ^ { 1 + 2 \pi A _ { x_0} }.
    \end{eqnarray}
for sufficiently small $\alpha$ and $\epsilon$.
The contradiction between (\ref{3}) and (\ref{26}) indicates that $c_{\epsilon}$ must be bounded when $\alpha$ is sufficiently small.
When $|c_{\epsilon}|\leq C$, using Lebesgue's dominated convergence, we have
	\begin{eqnarray*}
F^{2\pi}_\alpha(u_0)=\sup_{u\in \mathcal{S}}F^{2\pi}_\alpha(u).
	\end{eqnarray*}
Moreover, it is easy to see $u_0 \in C^\infty\left(\overline{\Sigma}\right)\cap \mathcal {S}$ from Lemma \ref{L1} and (\ref{s2}).
Therefore, we obtain Theorem \ref{T1} ($iii$).

\nolinenumbers

\section*{References}

\end{document}